\newtheorem{thm}{Theorem}[section]
\newtheorem{prop}[thm]{Proposition}
\newtheorem{rmk}[thm]{Remark}
\newtheorem{lemma}[thm]{Lemma}
\begin{document}

\title[A bi-invariant metric on the group of contactomorphisms of $\mathbb{R}^{2n}\times S^{1}$]
{An integer valued bi-invariant metric on the group of contactomorphisms of $\mathbb{R}^{2n}\times S^{1}$}

\author{Sheila Sandon}
\address{Departamento de Matem\'{a}tica, Instituto Superior T\'{e}cnico, Av. Rovisco Pais, 1049-001 Lisboa, Portugal}
\email{sandon@math.ist.utl.pt}

\begin{abstract}
\noindent
In his article \cite{V} on generating functions Viterbo constructed a bi-invariant metric on the group of compactly supported Hamiltonian symplectomorphisms of $\mathbb{R}^{2n}$. Using the set-up of \cite{mio} we extend the Viterbo metric to the group $\text{Cont}_0^{\phantom{0}c}\,(\mathbb{R}^{2n}\times S^1)$ of compactly supported contactomorphisms of $\mathbb{R}^{2n}\times S^{1}$ isotopic to the identity. We also prove that $\text{Cont}_0^{\phantom{0}c}\,(\mathbb{R}^{2n}\times S^1)$ is unbounded with respect to this metric.
\end{abstract}

\maketitle

\section{Introduction}

Gromov's non-squeezing theorem \cite{Gr} marked the beginning of the study of symplectic capacities, which are global symplectic invariants that measure the size of symplectic manifolds. Some years later Hofer \cite{H} discovered a way of measuring the size (or \textit{energy}) of Hamiltonian symplectomorphisms, by looking at the total variation of their generating Hamiltonian. This notion gave rise in fact to the definition of a bi-invariant metric (the Hofer metric) on the group of Hamiltonian symplectomorphisms \cite{H,P,LM}. The Hofer metric is deeply related to symplectic capacities. It was proved by Hofer that for any domain $\mathcal{U}$ of $\mathbb{R}^{2n}$ the Hofer-Zehnder capacity $c_{HZ}(\mathcal{U})$ is a lower bound for the energy of any compactly supported Hamiltonian symplectomorphism $\phi$ such that $\phi(\mathcal{U})\cap \mathcal{U}=\emptyset$. Lalonde and McDuff \cite{LM} gave moreover a direct geometric construction relating non-degeneracy of the Hofer metric to Gromov's non-squeezing theorem.\\
\\
Using the theory of generating functions, Viterbo constructed in \cite{V} a new symplectic capacity for domains of $\mathbb{R}^{2n}$ and a new bi-invariant metric $d_V$ on the group $\text{Ham}^c\,(\mathbb{R}^{2n})$ of compactly supported Hamiltonian symplectomorphisms\footnote{It was proved by Bialy and Polterovich \cite{BP} that the Viterbo and the Hofer metric coincide on a neighborhood of the identity in $\text{Ham}^c\,(\mathbb{R}^{2n})$, but Sorrentino and Viterbo \cite{SV} recently found examples showing that the two metrics are different in general.}, and proved an energy-capacity inequality relating these two notions. Moreover he defined a partial order $\leq_V$ on $\text{Ham}^c\,(\mathbb{R}^{2n})$, giving $\big(\,\text{Ham}^c\,(\mathbb{R}^{2n}),d_V\,\big)$ the structure of a partially ordered metric space \footnote{Recall that a \emph{partially ordered metric space} is a metric space $(Z,d)$ endowed with a partial order $\leq$ such that for every $a$, $b$, $c$ in $Z$ with $a\leq b\leq c$ it holds $d(a,b)\leq d(a,c)$.}.\\
\\
The generalization of the constructions in \cite{V} to the contact case \cite{B,mio} has been motivated by the theory of \textit{orderability} of contact manifolds introduced by Eliashberg and Polterovich \cite{EP}, and by the related contact rigidity phenomena studied in \cite{EKP}. Bhupal \cite{B} extended the Viterbo partial order to the group $\text{Cont}_0^{\phantom{0}c}\,(\mathbb{R}^{2n+1})$ of compactly supported and isotopic to the identity contactomorphisms of $\mathbb{R}^{2n+1}$, thereby proving orderability of $\mathbb{R}^{2n+1}$. In \cite{mio} we obtained a new proof of the contact non-squeezing theorem of Eliashberg, Kim and Polterovich \cite{EKP} by extending the Viterbo capacity to the contact manifold $\mathbb{R}^{2n}\times S^1$.\\
\\
In introducing the notion of orderability Eliashberg and Polterovich were in fact motivated by the question of finding some geometric structure on the group of contactomorphisms. They showed in \cite{EP} that the concept of \textit{relative growth}, which is available in any partially ordered group, can be applied to the contactomorphism group of an orderable contact manifold $(M,\xi)$ in order to associate to it a metric space $\big(Z(M,\xi),\delta\big)$. This can be done by defining, in terms of the relative growth, a pseudo-distance $\delta$ on the group of those contactomorphisms of $(M,\xi)$ that are generated by a positive Hamiltonian, and then by considering the quotient of this group by the equivalence classes of elements which are at zero distance from each other.\\
\\
In the present article we show that in the case of the contact manifold $\mathbb{R}^{2n}\times S^{1}$ it is possible to define a bi-invariant metric directly on the group of all (not necessarily positive) compactly supported contactomorphisms isotopic to the identity. This metric is a generalization of the Viterbo metric and can be easily constructed by using the set-up developed in \cite{mio}. However a crucial difference is that, in contrast with the symplectic case, our metric only takes values in $\mathbb{Z}$. We refer to the introduction of \cite{mio} for an explanation, in terms of generating functions, of the special role played by the integers in the study of rigidity phenomena for the contact manifold $\mathbb{R}^{2n}\times S^{1}$.\\
\\
Our results can be summarized in the following theorem.

\begin{thm}\label{main}
The group $\text{Cont}_0^{\phantom{0}c}\,(\mathbb{R}^{2n}\times S^1)$ admits an unbounded integer valued bi-invariant metric $d$. This metric is compatible with the Bhupal partial order $\leq_B$ in the sense that $\leq_B$ turns $\big(\,\text{Cont}_0^{\phantom{0}c}\,(\mathbb{R}^{2n}\times S^1),d\,\big)$ into a partially ordered metric space.
\end{thm}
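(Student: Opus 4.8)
The plan is to build the metric $d$ from a capacity-like functional on $\text{Cont}_0^{\phantom{0}c}\,(\mathbb{R}^{2n}\times S^1)$ obtained from generating functions, exactly paralleling Viterbo's symplectic construction but with the crucial modification that everything takes values in $\mathbb{Z}$. First I would recall from \cite{mio} that a compactly supported contactomorphism $\phi$ isotopic to the identity has an associated Legendrian submanifold in the $1$-jet space, and hence a generating function quadratic at infinity; from its critical values one extracts the min-max invariants $c_+(\phi)$ and $c_-(\phi)$. The key point, already emphasized in \cite{mio}, is that the discriminant (the degeneracy condition detecting fixed points / translated points) forces these spectral invariants to be integers. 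I would then \emph{define}
\begin{equation*}
\nu(\phi) := c_+(\phi) - c_-(\phi) \in \mathbb{Z}_{\geq 0},
\end{equation*}
and set $d(\phi,\psi) := \nu(\phi\psi^{-1})$. The bulk of the argument is to verify that $d$ is a genuine integer-valued metric.

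Next I would carry out the metric axioms in order. Nondegeneracy, $\nu(\phi)=0 \iff \phi=\mathrm{id}$, follows because $c_+=c_-$ forces the generating function to have a single critical value, which (as in Viterbo) characterizes the identity. Symmetry reduces to the identity $c_\pm(\phi^{-1}) = -c_\mp(\phi)$, proved by the standard generating-function duality (the generating function of the inverse is the negative of the reversed one). The triangle inequality is the substantive analytic step: one establishes the subadditivity $c_+(\phi\psi) \leq c_+(\phi) + c_+(\psi)$ and the companion bound for $c_-$ via a gluing/concatenation argument on generating functions together with a min-max comparison of the relevant cohomology classes. Combining these gives $\nu(\phi\psi) \leq \nu(\phi) + \nu(\psi)$, which yields the triangle inequality for $d$. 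Bi-invariance is then immediate once I show $\nu$ is a conjugation-invariant function, i.e. $\nu(\chi\phi\chi^{-1}) = \nu(\phi)$, which follows from the invariance of the spectral invariants under the natural action (generating functions of conjugates differ by a fiber-preserving equivalence that does not change critical values).

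For the remaining two assertions I would argue as follows. \emph{Unboundedness}: I would exhibit an explicit sequence $\phi_k \in \text{Cont}_0^{\phantom{0}c}\,(\mathbb{R}^{2n}\times S^1)$ — the natural candidates being iterates of a fixed positive contactomorphism, or lifts of the $S^1$-rotation supported on a large ball — for which $c_+(\phi_k) - c_-(\phi_k) \to \infty$; the contact non-squeezing estimates from \cite{mio} should give a lower bound on $\nu(\phi_k)$ growing with $k$, so no uniform bound on $d$ exists. \emph{Compatibility with $\leq_B$}: recall that $\phi \leq_B \psi$ means $\psi\phi^{-1}$ is generated by a nonnegative Hamiltonian, which on the generating-function side forces a monotonicity $c_\pm(\psi\phi^{-1}) \geq 0$, equivalently the spectral invariants are ordered. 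From $a \leq_B b \leq_B c$ I would deduce the nestedness of the relevant critical-value intervals and hence $d(a,b) = \nu(ba^{-1}) \leq \nu(ca^{-1}) = d(a,c)$, which is precisely the partially ordered metric space condition stated in the footnote.

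The step I expect to be the main obstacle is the triangle inequality, specifically the subadditivity $c_+(\phi\psi)\leq c_+(\phi)+c_+(\psi)$. In the symplectic setting this rests on a delicate analysis of how generating functions behave under composition and on the functoriality of min-max under the cup-product / Lagrangian-sum operation, and in the contact case one must control the extra $S^1$-direction and ensure the integrality is preserved throughout the gluing. I would therefore spend most of the effort on the generating-function composition formula and the accompanying min-max inequality, importing whatever structural lemmas are already available from \cite{mio}; once this inequality is in hand, the other metric axioms and the order-compatibility are comparatively formal.
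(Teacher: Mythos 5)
Your overall architecture (build $c^{\pm}$ from generating functions, set $d(\phi,\psi)=\nu(\phi\psi^{-1})$, verify the axioms, then prove unboundedness and order-compatibility) matches the paper, but the central mechanism is wrong in a way that breaks several steps. The spectral invariants $c^{+}(\phi)$ and $c^{-}(\phi)$ are \emph{not} integers: they are real-valued critical values (contact actions of translated points). What the paper actually proves (Lemmas \ref{1per}, \ref{inv_per}, \ref{conj_mio}) is that only the \emph{rounded} quantities $\lceil c^{+}\rceil$ and $\lfloor c^{-}\rfloor$ satisfy the identities you need, because the bifurcation-diagram argument shows critical values cannot cross integer levels; accordingly the metric is defined as $d(\phi,\psi)=\lceil c^{+}(\phi\psi^{-1})\rceil-\lfloor c^{-}(\phi\psi^{-1})\rfloor$. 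The three identities you invoke for the raw invariants are all false in the contact setting: the duality $c^{-}(\phi)=-c^{+}(\phi^{-1})$ fails (only $\lfloor c^{-}(\phi)\rfloor=-\lceil c^{+}(\phi^{-1})\rceil$ holds), the subadditivity $c^{+}(\phi\psi)\le c^{+}(\phi)+c^{+}(\psi)$ fails (only its rounded version holds), and conjugation invariance of $c^{\pm}$ fails because the contact action is not preserved under conjugation --- indeed even the property of being a translated point is not preserved. The paper states explicitly that the exact symplectic relation $c(u,\Psi(L))=c(u,L-\Psi^{-1}(0_B))$ ``does not hold in the contact case,'' and this is precisely why the 1-periodicity and the integer rounding are indispensable rather than cosmetic. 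Your proof of symmetry, triangle inequality and bi-invariance would therefore not go through as written.

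The unboundedness argument also has a gap: your primary candidate, iterates $\phi^{k}$ of a fixed positive contactomorphism, cannot work, because if $\phi$ is generated by a Hamiltonian supported in $\mathcal{V}$ then so is every $\phi^{k}$, whence $E(\phi^{k})\le 2\,c(\mathcal{V})$ uniformly in $k$ --- the paper points out that $d$ is unbounded but \emph{not} stably unbounded. The correct route is to vary the support: take the Traynor sequence of Hamiltonian symplectomorphisms supported in $B^{2n}(R)$ whose energies tend to $R$, lift them to $\mathbb{R}^{2n}\times S^{1}$ as in Remark \ref{lift}, and let $R\to\infty$. Finally, a smaller point: you conflate $\leq_B$ (defined by $c^{+}(\phi_1\phi_2^{-1})=0$) with the relation $\preceq$ (existence of a non-negative generating Hamiltonian); the paper proves compatibility with $\leq_B$ directly from Lemma \ref{inv_per} and deduces the statement for $\preceq$ from the implication $\preceq\,\Rightarrow\,\leq_B$.
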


The proof of Theorem \ref{main} is based on results of \cite{mio}.\\
\\
This article is organized as follows. In the first two sections we recall the set-up of \cite{mio}. In particular, in Section \ref{prel} we give some preliminaries on generating functions and in Section \ref{inv} we discuss the generalization to $\text{Cont}_0^{\phantom{0}c}\,(\mathbb{R}^{2n}\times S^1)$ of the invariants $c^+$ and $c^-$ that were constructed by Viterbo in \cite{V}. In Section \ref{metric} we define the metric $d$ on $\text{Cont}_0^{\phantom{0}c}\,(\mathbb{R}^{2n}\times S^1)$, and discuss an energy-capacity inequality relating it to the contact capacity for domains of $\mathbb{R}^{2n}\times S^1$ that was constructed in \cite{mio}. In Section \ref{order} we recall the definition of the Bhupal partial order $\leq_B$ on $\text{Cont}_0^{\phantom{0}c}\,(\mathbb{R}^{2n}\times S^1)$ and show that $d$ and $\leq_B$ are compatible. We also show how this implies that the energy (i.e. the distance to the identity) does not decrease along contact isotopies that are generated by a non-negative Hamiltonian. In the last section we prove that $d$ is unbounded.

\subsection*{Acknowledgements}

I thank my supervisor Miguel Abreu for his support and mathematical guidance, and Leonid Polterovich for feedback on preliminary versions of this article. My research was supported by an FCT graduate fellowship, program POCTI-Research Units Pluriannual Funding Program through the Center for Mathematical Analysis Geometry and Dynamical Systems and Portugal/Spain cooperation grant FCT/CSIC-14/CSIC/08.

\section{Preliminaries on generating functions}\label{prel}

We start by presenting some preliminaries on generating functions, referring to \cite{mio} and the bibliography therein for more details and background information.\\
\\
Let $B$ be a closed manifold, and consider a function $S:E\rightarrow\mathbb{R}$ defined on the total space of a fiber bundle $p:E \longrightarrow B$. We will assume that $dS: E\longrightarrow T^{\ast}E$ is transverse to $N_E:=\{\,(e,\eta)\in T^{\ast}E \;|\; \eta = 0 \;\text{on} \;\text{ker}\,dp\,(e)\,\}$, so that the set $\Sigma_S$ of fiber critical points is a submanifold of $E$, of dimension equal to the dimension of $B$. To any $e$ in $\Sigma_S$ we associate an element $v^{\ast}(e)$ of $T^{\phantom{p}\ast}_{p(e)}B$ by defining $v^{\ast}(e)\,(X):=dS\,(\widehat{X})$ for $X \in T_{p(e)}B$, where $\widehat{X}$ is any vector in $T_eE$ with $p_{\ast}(\widehat{X})=X$. Then $i_S:\Sigma_S\longrightarrow T^{\ast}B$, $e\mapsto \big(p(e),v^{\ast}(e)\big)$ is an exact Lagrangian immersion, with $i_S^{\phantom{S}\ast}\, \lambda_{\text{can}}=d\,(S_{|\Sigma_S})$. Its lift to $\big(J^1B=T^{\ast}B\times\mathbb{R}\,,\,\text{ker}(dz-\lambda_{\text{can}})\big)$ is the Legendrian immersion $j_S:\Sigma_S\rightarrow J^1B$, $e\mapsto\big(p(e), v^{\ast}(e),S(e)\big)$. The function $S:E\rightarrow\mathbb{R}$ is called a \textit{generating function} for $L_S:=i_S\,(\Sigma_S)\subset T^{\ast}B$ and for its lift $\widetilde{L_S}:=j_S(\Sigma_S)\subset J^1B$. A fundamental property of $S$ is that its critical points correspond under $i_S$ to intersections of $L_S$ with the 0-section $0_B$, and under $j_S$ to  intersections of $\widetilde{L_S}$ with the 0-wall $0_B \times \mathbb{R}$. Note also that if $e$ is a critical point of $S$ then its critical value is given by the $\mathbb{R}$-coordinate of the point $j_S(e)$.\\
\\
A generating function $S:E \longrightarrow \mathbb{R}$ is said to be \textit{quadratic at infinity} if $p:E \longrightarrow B $ is a vector bundle and if there exists a non-degenerate quadratic form $Q_{\infty}: E \longrightarrow \mathbb{R}$ such that $dS-\partial_vQ_{\infty}: E \longrightarrow E^{\ast}$ is bounded, where $\partial_v$ denotes the fiber derivative. Existence of generating functions quadratic at infinity for all Legendrian submanifolds of $J^1B$ contact isotopic to the 0-section was proved by Chaperon \cite{Ch} and Th\'{e}ret \cite{Th}, and independently by Chekanov \cite{C}. Their theorem is a generalization of the analogous result for Lagrangian submanifolds of $T^{\ast}B$ Hamiltonian isotopic to the 0-section, that was proved by Sikorav \cite{S,S2} using ideas of \cite{LS} and \cite{Ch1}. A second fundamental result is the uniqueness theorem for generating functions quadratic at infinity, that is due to Viterbo \cite{V} and Th\'{e}ret \cite{Th,Th2}.\\
\\
Relying on the Uniqueness Theorem, Viterbo \cite{V} applied Morse theoretical methods to generating functions in order to define invariants for Lagrangian submanifolds of $T^{\ast}B$ Hamiltonian isotopic to the 0-section. As observed by Bhupal \cite{B}, Viterbo's invariants can also be defined in the more general class $\mathcal{L}$ of Legendrian submanifolds of $J^1B$ contact isotopic to the 0-section. The construction goes as follows. Let $L$ be an element of $\mathcal{L}$ with generating function $S:E\rightarrow \mathbb{R}$. Denote by $E^{a}$, for $a\in \mathbb{R}\cup\infty$, the sublevel set of $S$ at $a$, and by $E^{-\infty}$ the set $E^{-a}$ for $a>0$ big. We consider the inclusion $i_a: (E^a,E^{-\infty})\hookrightarrow (E,E^{-\infty})$, and the induced map on cohomology
$$ i_a^{\phantom{a}\ast}: H^{\ast}(B)\equiv H^{\ast}(E,E^{-\infty})\longrightarrow H^{\ast}(E^a,E^{-\infty})$$
where $H^{\ast}(B)$ is identified with $H^{\ast}(E,E^{-\infty})$ via the Thom isomorphism. For any $u\neq 0$ in $H^{\ast}(B)$ we define
$$ c(u,L)=c(u,S)=\text{inf}\,\{\,a\in \mathbb{R} \;|\;i_a^{\phantom{a}\ast}(u)\neq 0\,\}.$$
Note that $c(u,L)$ is a critical value of $S$. 

\begin{lemma}[\cite{V}]\label{inv_leg}
Let $\mu\in H^n(B)$ denote the orientation class of $B$, and $0_B$ the 0-section in $J^1B$. The map $H^{\ast}(B)\times \mathcal{L}\longrightarrow\mathbb{R}$, $(u,L)\longmapsto c(u,L)$ satisfies the following properties:
\vspace{-0.2cm}
\begin{enumerate}
\renewcommand{\labelenumi}{(\roman{enumi})}
\item $$c\big(v\cup w, L_1+L_2\big)\geq c(v,L_1)+c(w,L_2) $$
where $L_1+L_2$ is defined \footnote{Note that $L_1+L_2$ is not necessarily a smooth submanifold. However it is generated by the function $S_1\sharp S_2: E_1 \oplus E_2 \longrightarrow \mathbb{R}$ which is defined by $S_1\sharp S_2\,(x;\xi_1,\xi_2)=S_1(x;\xi_1)+S_2(x;\xi_2)$, where $S_1: E_1 \longrightarrow \mathbb{R}$ and
$S_2: E_2 \longrightarrow \mathbb{R}$ are generating functions for $L_1$ and $L_2$ respectively. For a cohomology class $u$ in $B$, by $c(u,L_1+L_2)$ we mean in fact $c(u,S_1\sharp S_2)$. If $L_1+L_2$ is a smooth submanifold contact isotopic to the 0-section, then this is consistent with the definition given above.} by
$$ L_1+L_2:=\{\;(q,p,z)\in J^1B \;|\; p=p_1+p_2, \; z=z_1+z_2, \;(q,p_1,z_1)\in L_1, \; (q,p_2,z_2)\in L_2  \;\}.$$
\item $$c(\mu,\bar{L})=-c(1,L),$$ where $\bar{L}$ denotes the image of $L$ under the map 
$J^1B\rightarrow J^1B$, $(q,p,z)\mapsto(q,-p,-z)$.
\item Assume that $L\cap0_B\neq\emptyset$. Then $c(\mu, L)=c(1,L)$ if and only if $L$ is the $0$-section. In this case we have $c(\mu,L)=c(1,L)=0$.
\end{enumerate}
\end{lemma}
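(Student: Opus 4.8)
The plan is to treat all three statements as consequences of the Morse theory of a generating function $S$, invoking the Uniqueness Theorem so that $c(u,L)$ may be computed from any convenient $S$. I would first record two elementary facts, both used repeatedly below: $c(u,L)$ is always a critical value of $S$ (as already noted), and, since in the ring $H^*(E^a,E^{-\infty})$ a cup product vanishes whenever one factor does, $c(\alpha\cup\beta,L)\ge c(\alpha,L)$. Taking $\alpha=1$, and --- using Poincar\'{e} duality on $B$ with field coefficients to write $\mu=u\cup w$ --- taking $\alpha=u$, this yields the monotonicity $c(1,L)\le c(u,L)\le c(\mu,L)$, with $c(1,L)$ and $c(\mu,L)$ being respectively the least and the greatest cohomologically detected critical value of $S$.

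For (i) I would use that, if $S_1,S_2$ generate $L_1,L_2$, then the fiber sum $(q,v_1,v_2)\mapsto S_1(q,v_1)+S_2(q,v_2)$ on $E_1\oplus_B E_2$ generates $L_1+L_2$ and is again quadratic at infinity. The cleanest way to get the inequality in the stated direction is to realize $L_1+L_2$ as the restriction along the diagonal $\Delta\colon B\to B\times B$ of the external product $L_1\times L_2\subset J^1(B\times B)$, which is generated by the genuine external sum of $S_1$ and $S_2$. For external sums the product formula $c(u\times v,L_1\times L_2)=c(u,L_1)+c(v,L_2)$ holds by a K\"unneth computation on sublevel sets. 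Now $(E_1\oplus_B E_2)^{c}$ is the intersection of $(E_1\times E_2)^{c}$ with the fiber product, and under the induced restriction the external class $u\times v$ pulls back to $\Delta^*(u\times v)=u\cup v$; since the restriction of a class that vanishes at level $c$ still vanishes, $u\cup v$ can only become nonzero at a level no smaller than $u\times v$ does. Hence $c(u\cup v,L_1+L_2)\ge c(u\times v,L_1\times L_2)=c(u,L_1)+c(v,L_2)$.

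For (ii) the key observation is that $\bar L$ is generated by $-S$, so that the sublevel sets of its generating function are the superlevel sets of $S$ and the level parameter is reversed, $a\mapsto -a$. I would then apply Poincar\'{e}--Lefschetz duality for the vector bundle $E$ --- legitimate because $S$ is quadratic at infinity --- which provides a nondegenerate pairing between $H^*(E^a,E^{-\infty})$ and the relative cohomology of the complementary superlevel set, compatible with the Thom isomorphism in such a way that the unit $1$ and the orientation class $\mu$ are interchanged. Tracing the relevant detected critical value through this duality, the least value at which $1$ is detected for $S$ becomes the negative of the greatest value at which $\mu$ is detected for $-S$, which is exactly $c(\mu,\bar L)=-c(1,L)$.

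For (iii) the easy direction is direct: $0_B$ is generated by a nondegenerate fiberwise quadratic form, whose only fiber-critical points lie over the zero fiber at value $0$, so $i_a^*$ is zero for $a<0$ and an isomorphism for $a>0$, giving $c(1,0_B)=c(\mu,0_B)=0$. For the converse, assume $L\cap 0_B\neq\emptyset$ and $c(\mu,L)=c(1,L)$. The intersection with $0_B$ produces a critical point of $S$ of value $0$, so the monotonicity forces $c(1,L)\le 0\le c(\mu,L)$, the common value is $0$, and the least and greatest detected critical values coincide; consequently the full cohomology $H^*(B)$ of the sublevel sets appears at the single level $0$, i.e.\ $S$ is cohomologically indistinguishable from a quadratic form. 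The main obstacle is to upgrade this to the geometric conclusion $L=0_B$: the vanishing of the spectral width controls directly only the $\{p=0\}$ slice of $L$, so one must run a Lusternik--Schnirelmann / minimax argument on $S$ (in the style of Viterbo, and if necessary using the already established parts (i) and (ii)) to rule out any fiber-critical behavior off the zero section and thereby force $S$ to be equivalent to its quadratic form at infinity, so that $L=L_S=0_B$. I expect this last step to be the crux.
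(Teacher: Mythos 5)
The paper does not actually prove this lemma: it is quoted from Viterbo \cite{V} (and Bhupal \cite{B} for the Legendrian extension) with only the remark that the proof is purely algebraic--topological. So the comparison here is against the standard argument of \cite{V}. Your treatment of (i) and (ii) follows that standard route and is essentially correct: the preliminary monotonicity $c(1,L)\leq c(u,L)\leq c(\mu,L)$ via the module structure of $H^{\ast}(E^a,E^{-\infty})$ over $H^{\ast}(E^a)$ and Poincar\'{e} duality on $B$, the product formula for $L_1\times L_2$ plus restriction to the diagonal for (i), and the identification of $\bar L$ with the Legendrian generated by $-S$ together with the duality exchanging sublevel and superlevel sets (and $1$ with $\mu$) for (ii). These are exactly Viterbo's arguments, transplanted verbatim to $J^1B$.

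Part (iii), however, contains a genuine gap, and it sits precisely where you say you expect the crux to be. First, a concrete error: from $L\cap 0_B\neq\emptyset$ you infer that the resulting critical value $0$ of $S$ satisfies $c(1,L)\leq 0\leq c(\mu,L)$ ``by monotonicity''. Monotonicity only compares the values $c(u,L)$ for cohomology classes $u$; an arbitrary critical value of $S$ (e.g.\ one arising from a cancelling pair of fiber-critical points) need not lie between $c(1,L)$ and $c(\mu,L)$, so this step is unjustified. Second, and more importantly, the Lusternik--Schnirelmann step is not carried out, and its correct conclusion is not the one you aim for. The needed lemma is: if $c(u,L)=c(u\cup v,L)=c$ with $\deg v>0$, then $v$ restricts nontrivially to every neighborhood of the critical set $K_c$ of $S$ at level $c$. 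Applying this with $u=1$, $v=\mu$ when $c(1,L)=c(\mu,L)=c$ shows that $p(K_c)=B$ (otherwise take the neighborhood $p^{-1}(B\setminus\{q_0\})$, on which $\mu$ vanishes). Since fiber-critical points of $S$ at level $c$ correspond to points of $L$ lying in $\{p=0,\ z=c\}$, this says $L$ contains the vertical translate of the $0$-section at height $c$; as $L$ is an embedded copy of the compact manifold $B$, it equals that translate. Only now does the hypothesis $L\cap 0_B\neq\emptyset$ enter, forcing $c=0$ and $L=0_B$. Your formulation (``rule out fiber-critical behavior off the zero section and force $S$ to be equivalent to its quadratic form at infinity'') points at a stronger and unnecessary conclusion; without the LS lemma and the surjectivity-onto-$B$ argument, part (iii) is not proved.
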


The proof of this lemma is purely algebraic topological, and does not require any argument of symplectic or contact topology. It was originally given by Viterbo in the setting of Lagrangian submanifolds of $T^{\ast}B$ Hamiltonian isotopic to the 0-section, but its extension to the contact case is immediate (see \cite{B} or \cite{mio}). In the symplectic case, the symplectic character of $c$ is given by the fact that
\begin{equation}\label{V}
c\big(u,\Psi(L)\big)=c\big(u,L-\Psi^{-1}(0_B)\big)
\end{equation}
for every Hamiltonian symplectomorphism $\Psi$ of $T^{\ast}B$ (see \cite{V} or \cite{mio}). The analogue of this result does not hold in the contact case. However Bhupal proved that the following weaker statement is still true.

\begin{lemma}[\cite{B}]\label{weaker}
For any contactomorphism $\Psi$ of $J^1B$ contact isotopic to the identity, $0\neq u \in H^{\ast}(B)$ and $L\in \mathcal{L}$ it holds $$c\big(u,\Psi(L)\big)=0 \quad\Leftrightarrow \quad c\big(u,L-\Psi^{-1}(0_B)\big)=0.$$
\end{lemma}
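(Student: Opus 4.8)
The plan is to imitate Viterbo's proof of the identity \eqref{V} in the contact setting, the essential new feature being that a contactomorphism of $J^1B$ does not preserve the $\mathbb{R}$-coordinate but rescales it by its everywhere positive conformal factor. Concretely, I would first choose a generating function $S:E\to\mathbb{R}$ quadratic at infinity for $L$, and a generating function $F$ for the Legendrian $\Psi^{-1}(0_B)$, which lies in $\mathcal{L}$ because $\Psi$ is contact isotopic to the identity. Using the composition formula for generating functions recalled in \cite{mio} (and already used in \cite{V,B}), I would write down explicit generating functions $S_1$ for $\Psi(L)$ and $S_2$ for $L-\Psi^{-1}(0_B)=L+\overline{\Psi^{-1}(0_B)}$, the latter built from $S$ and the conjugate of $F$ via the sum construction of Lemma \ref{inv_leg}.

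The geometric core is the following bijection. A point of $\Psi(L)$ lying on $0_B$ is of the form $\Psi(x)=(q,0,0)$ with $x\in L$, i.e. $x\in L\cap\Psi^{-1}(0_B)$; writing $x=(q,p,z)$, the very same datum produces the point $(q,p-p,z-z)=(q,0,0)$ of $L+\overline{\Psi^{-1}(0_B)}$ on $0_B$. Thus $x\mapsto\Psi(x)$ identifies $\Psi(L)\cap0_B$ with $(L-\Psi^{-1}(0_B))\cap0_B$, and in both cases the intersection occurs at height $z=0$. Under $i_{S_1}$ and $i_{S_2}$ these are exactly the critical points of $S_1$ and $S_2$ with critical value $0$. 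I would then check, by comparing the two composition formulas, that $S_1$ and $S_2$ agree up to a fibre-preserving equivalence followed by multiplication by a positive function, namely the conformal factor $e^{g}$ of $\Psi$ evaluated at the corresponding fibre critical point; in particular $S_1$ and $S_2$ have everywhere the same sign, so $\{S_1<0\}$, $\{S_1=0\}$, $\{S_1>0\}$ correspond to $\{S_2<0\}$, $\{S_2=0\}$, $\{S_2>0\}$.

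To finish I would use that the vanishing $c(u,L')=0$ is detected entirely by the behaviour of the maps $i_a^{\phantom{a}\ast}$ as $a$ crosses $0$: since $\{a\mid i_a^{\phantom{a}\ast}(u)\neq0\}$ is upward closed, one has $c(u,L')=0$ precisely when $i_a^{\phantom{a}\ast}(u)\neq0$ for every $a>0$ and $i_a^{\phantom{a}\ast}(u)=0$ for every $a<0$. Because the rescaling is positive it maps the sublevel set at level $0$ to the sublevel set at level $0$ and preserves the sign of every other level, so the two families of inclusions induce matching maps on cohomology near level $0$; hence the relevant $i_a^{\phantom{a}\ast}(u)$ for $S_1$ and for $S_2$ vanish or not simultaneously, giving $c(u,\Psi(L))=0\Leftrightarrow c(u,L-\Psi^{-1}(0_B))=0$. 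Throughout, the Uniqueness Theorem guarantees that $c$ is independent of the chosen generating functions, so the comparison above is legitimate.

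I expect the main obstacle to be exactly this last step: the conformal factor $e^{g}$ is a genuine function, not a constant, so it controls the level sets at height $0$ but distorts the full family of sublevel sets at every other level. One therefore cannot conclude $c(u,\Psi(L))=c(u,L-\Psi^{-1}(0_B))$ — this is false in general, which is precisely the point — and must instead argue that the positive rescaling neither creates nor destroys the cohomological detection of the class $u$ as the level passes through $0$. Making this limiting argument rigorous, in place of the cleaner exact equality available in the symplectic case, is the heart of the matter.
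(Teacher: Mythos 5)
Your ``geometric core'' is correct as far as it goes: the level-$0$ critical points of a generating function of $\Psi(L)$ and of one of $L-\Psi^{-1}(0_B)$ are both naturally identified with $L\cap\Psi^{-1}(0_B)$, since critical points with critical value $0$ correspond to intersections with the $0$-section. But the step you then lean on --- that $S_1$ and $S_2$ agree up to a fibre-preserving equivalence followed by multiplication by a positive function, so that all three sets $\{S_i<0\}$, $\{S_i=0\}$, $\{S_i>0\}$ match --- is asserted without proof, and it is essentially the entire content of the lemma. No composition formula in the contact setting produces such a relation; indeed the failure of the symplectic identity $c\big(u,\Psi(L)\big)=c\big(u,L-\Psi^{-1}(0_B)\big)$ in the contact case is precisely the statement that the two generating functions are \emph{not} equivalent in any controlled way. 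Concretely, the critical points of $S_1$ at arbitrary levels correspond to $\Psi(L)\cap\{p=0\}$ (intersections with the $0$-wall), while those of $S_2$ correspond to pairs of points of $L$ and of $\Psi^{-1}(0_B)$ lying over the same base point with the same $p$-coordinate: these two sets agree \emph{only} at level $0$. So only the zero levels are geometrically comparable, and a global relation $S_1=f\cdot(S_2\circ\Phi)$ with $f>0$ is exactly what one cannot expect to hold. Your closing paragraph concedes that the last analytic step is unresolved, but the gap sits one step earlier, in this unsubstantiated rescaling claim.

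The proof indicated in the paper (following Bhupal) takes a different route that uses your level-$0$ observation as its key input but avoids any global comparison of generating functions. One interpolates between the two Legendrians by the family $L_t=\Psi_t^{\phantom{t}-1}\Psi(L)-\Psi_t^{\phantom{t}-1}(0_B)$, where $\Psi_t$ is a contact isotopy from the identity to $\Psi$, so that $L_0=\Psi(L)$ and $L_1=L-\Psi^{-1}(0_B)$. Choosing a continuous family $S_t$ of generating functions quadratic at infinity, $t\mapsto c(u,L_t)$ is a continuous selection of critical values of $S_t$. The level-$0$ critical points of $S_t$ correspond to $\Psi_t^{\phantom{t}-1}\Psi(L)\cap\Psi_t^{\phantom{t}-1}(0_B)=\Psi_t^{\phantom{t}-1}\big(\Psi(L)\cap 0_B\big)$, a set carried along unchanged by the isotopy; an analysis of the bifurcation diagram of $S_t$ then shows that no one-parameter family of critical values can cross the value $0$, so $\{t\;|\;c(u,L_t)=0\}$ is both open and closed in $[0,1]$, which yields the equivalence. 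In short, the right move is to feed the bijection of zero-level intersections into a one-parameter continuity argument, rather than into a direct identification of $S_1$ with a positive rescaling of $S_2$.
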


The idea of the proof is to study the bifurcation diagram of a 1-parameter family $S_t$ of generating functions of $\Psi_t^{\phantom{t}-1}\Psi(L)-\Psi_t^{\phantom{t}-1}(0_B)$, where $\Psi_t$ is a contact isotopy connecting $\Psi$ to the identity, and to show that there cannot be a 1-parameter family $c_t$ of critical values of $S_t$ crossing the critical value $0$. The key reason why the critical value $0$ plays a special role is that critical points with critical value $0$ correspond to intersections of the generated Legendrian submanifold with the 0-section. As we observed in \cite{mio}, if $\Psi$ is 1-periodic in the $\mathbb{R}$-coordinate of $J^1B=T^{\ast}B\times\mathbb{R}$ then the argument of Bhupal can also be applied if we replace $0$ by any other integer, to get the following result. We denote by $\lceil\cdot\rceil$ (respectively $\lfloor\cdot\rfloor$) the smallest (respectively largest) integer that is greater or equal (respectively smaller or equal) to the given number.

\begin{lemma}[\cite{mio}]\label{1per}
Let $\Psi$ be a contactomorphism of $J^1B$ which is 1-periodic in the $\mathbb{R}$-coordinate of $J^1B=T^{\ast}B\times\mathbb{R}$, and isotopic to the identity through 1-periodic contactomorphisms. Then for every $u\neq 0$ in $H^{\ast}(B)$ and $L\in\mathcal{L}$ it holds
$$\lceil c\big(u,\Psi(L)\big)\rceil=\lceil c\big(u,L-\Psi^{-1}(0_B)\big)\rceil \quad \text{and} \quad \lfloor c\big(u,\Psi(L)\big)\rfloor=\lfloor c\big(u,L-\Psi^{-1}(0_B)\big)\rfloor.$$
\end{lemma}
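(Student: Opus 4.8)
The plan is to run Bhupal's proof of Lemma \ref{weaker} essentially verbatim, with the single modification that the distinguished critical level $0$ is replaced by an arbitrary integer $k$; the hypothesis of $1$-periodicity is exactly what licenses this replacement. Fix $u\neq 0$ and $L\in\mathcal{L}$, and let $\{\Psi_t\}$ be a contact isotopy through $1$-periodic contactomorphisms running from the identity to $\Psi$. Following Bhupal I would consider the family of Legendrians $L_t:=\Psi_t^{\phantom{t}-1}\Psi(L)-\Psi_t^{\phantom{t}-1}(0_B)\in\mathcal{L}$, whose endpoints are $L_0=\Psi(L)$ and $L_1=L-\Psi^{-1}(0_B)$, choose a continuous family $S_t$ of generating functions quadratic at infinity (Chekanov--Th\'eret), and set $g(t):=c(u,L_t)$. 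Since $c$ is $C^0$-continuous along isotopies, $g$ is continuous in $t$; the whole problem is to understand how $g$ interacts with the integer levels.

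The key observation, which is where $1$-periodicity enters, is the following. Write $T$ for the contactomorphism $(q,p,z)\mapsto(q,p,z+1)$, so that $T^k(0_B)$ is the section at height $k$. A critical point of $S_t$ with critical value exactly $k$ corresponds to a point $(q,0,k)\in L_t$; unwinding the definition of the difference operation, this occurs precisely when $\Psi_t^{\phantom{t}-1}\Psi(L)$ meets $\Psi_t^{\phantom{t}-1}(T^k 0_B)$. Because $\Psi_t$ is $1$-periodic it commutes with $T$, so $\Psi_t^{\phantom{t}-1}(T^k 0_B)=T^k\Psi_t^{\phantom{t}-1}(0_B)$, and applying $\Psi_t$ shows that this intersection is in $t$-independent bijection with $\Psi(L)\cap T^k(0_B)$. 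Thus the critical points of $S_t$ at level exactly $k$ are \emph{frozen}: they persist for all $t$, and a direct check shows each carries the constant critical value $k$. This is the exact analogue of Bhupal's remark that level-$0$ critical points correspond to the ($t$-independent) set $\Psi(L)\cap 0_B$.

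With this in hand, Bhupal's bifurcation-diagram analysis applies with $k$ in place of $0$ and shows that $g$ cannot cross the level $k$: whenever $g(t_0)=k$ the min-max value is carried by one of the frozen critical points, so $g$ is pinned to the value $k$ near $t_0$. The cleanest way to package the resulting conclusion is to apply the endpoint statement $c(u,\Psi(L))=k\Leftrightarrow c(u,L-\Psi^{-1}(0_B))=k$ not only to $\{\Psi_t\}$ itself but to every truncation $\{\Psi_t\}_{t\le s}$ (each again a $1$-periodic isotopy from the identity). This forces $g(s)\in\mathbb{Z}$ iff $g(0)\in\mathbb{Z}$, with equality when integral; hence if $g(0)$ is an integer then $g$ is constantly that integer, while if $g(0)$ is not an integer then $g$ avoids $\mathbb{Z}$ entirely and, being continuous, stays inside the open interval $(\lfloor g(0)\rfloor,\lceil g(0)\rceil)$. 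In either case $g(0)$ and $g(1)$ lie in the same cell of the decomposition of $\mathbb{R}$ into integer points and open unit intervals, which is exactly the assertion $\lceil c(u,\Psi(L))\rceil=\lceil c(u,L-\Psi^{-1}(0_B))\rceil$ together with $\lfloor c(u,\Psi(L))\rfloor=\lfloor c(u,L-\Psi^{-1}(0_B))\rfloor$.

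The main obstacle is not the floor/ceiling bookkeeping but the transfer of Bhupal's crossing argument to the level $k$: one must verify that $1$-periodicity genuinely makes the level-$k$ intersection set $t$-independent (without periodicity $\Psi_t^{\phantom{t}-1}(T^k 0_B)\neq T^k\Psi_t^{\phantom{t}-1}(0_B)$ and all rigidity at level $k$ is lost), and that the selected min-max critical value cannot slip through $k$ via a degenerate bifurcation. The first point is the genuinely new input and is supplied by the commutation $\Psi_t T=T\Psi_t$ above; the second is technical but is already established in Bhupal's treatment of the level $0$ and carries over unchanged once the frozen-critical-point picture is in place.
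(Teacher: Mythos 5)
Your proposal is correct and follows essentially the same route as the paper, which itself only sketches the argument and defers to \cite{mio}: run Bhupal's bifurcation-diagram analysis for the family $\Psi_t^{\phantom{t}-1}\Psi(L)-\Psi_t^{\phantom{t}-1}(0_B)$ with the level $0$ replaced by an arbitrary integer $k$, the new input being exactly your observation that $1$-periodicity makes the level-$k$ critical points correspond to the $t$-independent set $\Psi(L)\cap T^k(0_B)$. Your floor/ceiling packaging at the end is a correct (if slightly roundabout) way of extracting the stated conclusion from the non-crossing property.
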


\section{The invariants $c^+$ and $c^-$}\label{inv}

The invariants for Lagrangian submanifolds of $T^{\ast}B$ discussed in the previous section where applied by Viterbo \cite{V} to the special case of a compactly supported Hamiltonian symplectomorphism $\phi$ of $\big(\,\mathbb{R}^{2n}\,,\,\omega=dx\wedge dy\,\big)$, by regarding its compactified graph as a Lagrangian submanifold of $T^{\ast}S^{2n}$. Viterbo obtained in this way two invariants $c^+(\phi)$ and $c^-(\phi)$, defined by using respectively the orientation and unit cohomology classes of $S^{2n}$. The invariants $c^+$ and $c^-$ were generalized in \cite{B} and \cite{mio} respectively to the case of compactly supported contactomorphisms of $\mathbb{R}^{2n+1}$ and $\mathbb{R}^{2n}\times 
S^1$. We will review in this section the construction and properties of $c^+$ and $c^-$, discussing directly the contact case\footnote{Although $c^-$ did not appear in \cite{mio} it can be treated exactly as $c^+$, which is what in \cite{mio} we just called $c$.}.\\
\\
Let $\phi$ be a contactomorphism of $\big(\mathbb{R}^{2n+1},\xi_0=\text{ker}\,(dz-ydx)\big)$, with $\phi^{\ast}(dz-ydx)=e^g(dz-ydx)$. Following Bhupal \cite{B}, we define a Legendrian embedding $\Gamma_{\phi}:\mathbb{R}^{2n+1}\longrightarrow J^1\mathbb{R}^{2n+1}$ to be the composition $\tau \circ \text{gr}_{\phi}$, where $\text{gr}_{\phi}:\mathbb{R}^{2n+1}\longrightarrow \mathbb{R}^{2(2n+1)+1}$ is the Legendrian embedding $q\mapsto (q,\phi(q),g(q))$ and $\tau:\mathbb{R}^{2(2n+1)+1}\longrightarrow J^1\mathbb{R}^{2n+1}$ the contact embedding $(x,y,z,X,Y,Z,\theta)\mapsto \big(x,Y,z, Y-e^{\theta}y, x-X, e^{\theta}-1, xY-XY+Z-z\big)$. Here we consider the product contact structure $e^{\theta}(dz-ydx)-(dZ-YdX)$ on $\mathbb{R}^{2(2n+1)+1}$. More explicitely, for $\phi=(\phi_1,\phi_2,\phi_3)$ we have that $\Gamma_{\phi}:\mathbb{R}^{2n+1}\longrightarrow J^1\mathbb{R}^{2n+1}$ is given by
$$
\Gamma_{\phi}(x,y,z)=(x,\phi_2,z,\phi_2-e^gy,x-\phi_1,e^g-1,x\phi_2-\phi_1\phi_2+\phi_3-z).
$$
Note that $\Gamma_{\phi}$ can also be written as $\Gamma_{\phi}=\Psi_{\phi}\,(\text{0-section})$ where $\Psi_{\phi}$ is the local contactomorphism of $J^1\mathbb{R}^{2n+1}$ defined by the diagram
\begin{displaymath}
\xymatrix{
 \quad \mathbb{R}^{2(2n+1)+1} \quad \ar[r]^{\overline{\phi}} \ar[d]_{\tau} &
 \quad \mathbb{R}^{2(2n+1)+1} \quad \ar[d]^{\tau} \\
 \quad J^1\mathbb{R}^{2n+1} \quad \ar[r]_{\Psi_{\phi}} &  \quad J^1\mathbb{R}^{2n+1}}\quad
\end{displaymath}
with $\overline{\phi}$ the contactomorphism $(p,P,\theta)\mapsto (p,\phi(P), g(P)+\theta)$. This shows in particular that $\Gamma_{\phi}$ is contact isotopic to the 0-section. Notice also that the diagram above behaves well with respect to composition: for all contactomorphisms $\phi$ and $\psi$ we have namely that $\Psi_{\phi}\circ\Psi_{\psi}=\Psi_{\phi\psi} $ (in particular $\Gamma_{\phi\,\circ\,\psi}=\Psi_{\phi}\,(\Gamma_{\psi})$) and $\Psi_{\phi}^{\phantom{\phi}-1}=\Psi_{\phi^{-1}}$.\\
\\
If $\phi$ is compactly supported then we can see $\Gamma_{\phi}$ as a Legendrian submanifold of $J^1S^{2n+1}$, thus we can associate to it a generating function. The same is true if $\phi$ is a contactomorphism of $\mathbb{R}^{2n+1}$ which is 1-periodic in the $z$-coordinate and compactly supported in the $(x,y)$-plane (i.e. a compactly supported contactomorphism of $\mathbb{R}^{2n}\times S^1$) because then we can see $\Gamma_{\phi}$ as a Legendrian submanifold of $J^1\big(S^{2n}\times S^1\big)$. We denote respectively by $\text{Cont}_0^{\phantom{0}c}\,(\mathbb{R}^{2n+1})$ and $\text{Cont}_0^{\phantom{0}c}\,(\mathbb{R}^{2n}\times S^1)$ the groups of compactly supported contactomorphisms of $\mathbb{R}^{2n+1}$ and $\mathbb{R}^{2n}\times S^1$ that are isotopic to the identity. In the following we will always regard compactly supported contactomorphisms of $\mathbb{R}^{2n}\times S^1$ as 1-periodic contactomorphisms of $\mathbb{R}^{2n+1}$.\\
\\
For $\phi$ in $\text{Cont}_0^{\phantom{0}c}\,(\mathbb{R}^{2n+1})$ or in $\text{Cont}_0^{\phantom{0}c}\,(\mathbb{R}^{2n}\times S^1)$ we define 
$$c^+(\phi):=c(\mu,\Gamma_{\phi})$$
$$c^-(\phi):=c(1,\Gamma_{\phi})$$
where $\mu$ and $1$ are respectively the orientation and the unit cohomology class either of $S^{2n+1}$ or of $S^{2n}\times S^1$. Note that $c^+(\phi)$ and $c^-(\phi)$ are critical values for any generating function of $\Gamma_{\phi}$. Moreover, exactly as in the symplectic case, they satisfy the following properties.

\begin{lemma}\label{inv_sympl}
For all $\phi$, $\psi$ in $\text{Cont}_0^{\phantom{0}c}\,(\mathbb{R}^{2n+1})$ or $\text{Cont}_0^{\phantom{0}c}\,(\mathbb{R}^{2n}\times S^1)$ it holds:
\vspace{-0.2cm}
\begin{enumerate}
\renewcommand{\labelenumi}{(\roman{enumi})}
\item $c^+(\phi)\geq 0$ and $c^-(\phi)\leq 0$.
\item $c^+(\phi)=c^-(\phi)= 0$ if and only if $\phi$ is the identity.
\end{enumerate}
\end{lemma}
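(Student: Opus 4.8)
The plan is to deduce both properties directly from the algebraic-topological Lemma~\ref{inv_leg}, using the description of $\Gamma_\phi$ as a Legendrian submanifold contact isotopic to the $0$-section and the dictionary between critical points of generating functions and intersections with the $0$-wall. Throughout I will write $L=\Gamma_\phi$ and use that $L\in\mathcal{L}$.

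For part (i), the key observation is that a nonzero cohomology class always produces a critical value of a definite sign. Concretely, $c(1,L)$ is the infimum of the levels $a$ at which the unit class $1\in H^0$ appears in $i_a^{\ast}$, while $c(\mu,L)$ is the infimum for the top class $\mu$. Since $1\in H^0(B)$ is the lowest-degree class, it becomes visible first, so $c(1,L)\le c(\mu,L)$; this is the standard inequality $c^-\le c^+$ coming from the cup-length ordering. To pin down the signs, I would use that $\Gamma_\phi$ is generated by a function quadratic at infinity whose quadratic form $Q_\infty$ controls the behaviour at infinity. When $\phi=\mathrm{id}$ the generated Legendrian is exactly the $0$-section, whose generating function can be taken to be the quadratic form $Q_\infty$ itself, for which $c(1,Q_\infty)=c(\mu,Q_\infty)=0$. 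The continuity/monotonicity of $c(u,\cdot)$ under deformation, together with the fact that $1$ and $\mu$ always contribute critical values that are respectively $\le 0$ and $\ge 0$ relative to the value at the minimax of $Q_\infty$, then gives $c^+(\phi)=c(\mu,\Gamma_\phi)\ge 0$ and $c^-(\phi)=c(1,\Gamma_\phi)\le 0$.

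For part (ii), one direction is the identity computation just mentioned: if $\phi=\mathrm{id}$ then $\Gamma_\phi=0_B$ and $c^+(\phi)=c^-(\phi)=0$. The substantive direction is the converse. Here I would invoke part (iii) of Lemma~\ref{inv_leg}: since $\Gamma_\phi$ is contact isotopic to the $0$-section it always meets $0_B$ (a Legendrian isotopic to the zero section of a jet space over a closed base cannot be disjoint from the $0$-wall, so $L\cap 0_B\neq\emptyset$), and the lemma states that under this hypothesis $c(\mu,L)=c(1,L)$ \emph{if and only if} $L$ is the $0$-section. Now if $c^+(\phi)=c^-(\phi)=0$ then in particular $c(\mu,\Gamma_\phi)=c(1,\Gamma_\phi)$, so Lemma~\ref{inv_leg}(iii) forces $\Gamma_\phi=0_B$. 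It then remains to translate $\Gamma_\phi=0_B$ back into $\phi=\mathrm{id}$: unwinding the definition $\Gamma_\phi=\tau\circ\mathrm{gr}_\phi$, the condition that the graph of $\phi$ (twisted by the conformal factor $g$) be taken to the $0$-section is equivalent to $\phi(q)=q$ for all $q$ and $g\equiv 0$, i.e.\ $\phi=\mathrm{id}$.

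The main obstacle is the final translation step, verifying that $\Gamma_\phi$ being the $0$-section really does characterize $\phi=\mathrm{id}$ and not merely some weaker condition. This requires carefully tracing the explicit embedding $\tau$ and the map $\mathrm{gr}_\phi:q\mapsto(q,\phi(q),g(q))$ through the formula for $\tau$ given in the excerpt, and checking that all the jet coordinates $(Y-e^\theta y,\,x-X,\,e^\theta-1,\,xY-XY+Z-z)$ vanish identically exactly when $\phi$ is the identity; in particular the vanishing of the $e^\theta-1$ component forces $g\equiv 0$, which then simplifies the remaining components to the equations $X=x$, $Y=y$, $Z=z$, i.e.\ $\phi(q)=q$. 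This is a direct but somewhat delicate coordinate computation, and it is where the specific form of the Bhupal embedding, rather than any soft topological input, does the work.
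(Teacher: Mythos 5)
Your part (ii) is essentially the paper's argument: it is an immediate consequence of Lemma \ref{inv_leg}(iii), and your extra verification that $\Gamma_\phi=0_B$ forces $\phi=\mathrm{id}$ (via the explicit form of $\tau$ and $\mathrm{gr}_\phi$) is correct, if slightly more than is strictly needed. One small slip there: meeting the $0$-wall $\{p=0\}$ is not the same as meeting the $0$-section $\{p=0,z=0\}$, so "isotopic to the $0$-section, hence meets $0_B$" is not a valid inference in general; in the case at hand $L\cap 0_B\neq\emptyset$ either because $\phi$ is compactly supported (so $\Gamma_\phi$ agrees with $0_B$ outside a compact set) or because $0=c^+(\phi)$ is a critical value of the generating function.

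Part (i), however, has a genuine gap. The assertion that "$1$ and $\mu$ always contribute critical values that are respectively $\le 0$ and $\ge 0$" is precisely the statement to be proved, and the proposed mechanism --- continuity of $c(u,\cdot)$ along a deformation starting from $Q_\infty$, where both values are $0$ --- cannot deliver it: under a general contact isotopy the numbers $c(u,\Gamma_{\phi_t})$ move continuously but in both directions (monotonicity holds only for isotopies generated by non-negative Hamiltonians), so starting at $0$ gives no sign control at time $1$. The decisive input you are missing is compact support. The paper's proof picks a point $P$ of the base outside the support of $\phi$; since $\Gamma_\phi$ (and $\overline{\Gamma_\phi}$) coincide with the $0$-section near $P$, the restriction $S|_{E_P}$ of a g.f.q.i.\ to the fiber over $P$ is a nondegenerate quadratic form, and the commutative square comparing $(E^0,E^{-\infty})$ with $(E_P^{\,0},E_P^{\,-\infty})$ shows $i_0^{\,*}(1)\neq 0$, hence $c(1,\Gamma_\phi)\le 0$ and $c(1,\overline{\Gamma_\phi})\le 0$; the inequality $c^+(\phi)\ge 0$ then follows from Lemma \ref{inv_leg}(ii), $c(\mu,\Gamma_\phi)=-c(1,\overline{\Gamma_\phi})$. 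That this localization at a point outside the support is not a dispensable convenience is shown by the paper's closing discussion of $S^1$: there no such point exists, and the sign statement genuinely fails ($c^+$ of a small negative rotation is negative). So any correct proof of (i) must use compact support in an essential way, and your proposal does not.
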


\begin{proof}
Point (ii) is an immediate consequence of Lemma \ref{inv_leg}(iii). As for (i), it can be seen as follows \cite{V}. We will prove that $c(1,\Gamma_{\phi})\leq 0$ and $c(1,\overline{\Gamma_{\phi}})\leq 0$ for any $\phi$, so that $c^-(\phi)\leq 0$ and, using Lemma \ref{inv_leg}(ii), $ c^+(\phi)=c(\mu,\Gamma_{\phi})=-c(1,\overline{\Gamma_{\phi}}) \geq 0$.
Since $c(1,\Gamma_{\phi})=\text{inf}\,\{\,a\in \mathbb{R} \;|\;i_{a}^{\phantom{a}\ast}(1)\neq 0\,\}$, we need to prove that
$i_0^{\phantom{0}\ast}(1)\neq 0$. Let $S: E\rightarrow\mathbb{R}$ be a g.f.q.i. for $\Gamma_{\phi}$ (respectively $\overline{\Gamma_{\phi}}$) and take a point $P$ in $B$, where $B$ denotes either $S^{2n+1}$ or $S^{2n}\times S^1$, outside the support of $\phi$. Consider the commutative diagram
\begin{displaymath}
\xymatrix{
 H^{\ast}(E^0,E^{-\infty})   \ar[r] &
 H^{\ast}(E_P^{\phantom{P}0},E_P^{\phantom{P}-\infty}) \\
 H^{\ast}(B) \ar[r] \ar[u]_{(i_0)^{\ast}} &
 H^{\ast}(\{P\}) \ar[u]_{\cong}}
\end{displaymath}
where the horizontal maps are induced by the inclusions $\{P\}\hookrightarrow B$ and $E_P\hookrightarrow E$. Since $P$ is outside the support of $\phi$ we have that $\Gamma_{\phi}$ and $\overline{\Gamma_{\phi}}$ coincide with the 0-section on a neighborhood of $P$, and so $S_{|E_P}: E_P \rightarrow \mathbb{R}$ is a quadratic form. It follows that the vertical map on the right hand side is an isomorphism. Since the horizontal map on the bottom sends 1 to 1, we see that $i_0^{\phantom{0}\ast}(1)\neq 0$ as we wanted.
\end{proof}

In the symplectic case the relation (\ref{V}), together with the properties in Lemma \ref{inv_leg}, implies that for every $\phi$, $\psi$ in $\text{Ham}^c\,(\mathbb{R}^{2n})$ we have $c^-(\phi)=-c^+(\phi^{-1})$, $c^+(\phi\psi)\leq c^+(\phi) + c^+(\psi)$ and $c^-(\phi\psi)\geq c^-(\phi) + c^-(\psi)$ (see \cite{V} or \cite{mio}). In the contact 1-periodic case we only get the following weaker statement, using Lemma \ref{1per}.

\begin{lemma}[\cite{mio}]\label{inv_per}
For all $\phi$, $\psi$ in $\text{Cont}_0^{\phantom{0}c}\,(\mathbb{R}^{2n}\times S^1)$ it holds:
\vspace{-0.2cm}
\begin{enumerate}
\renewcommand{\labelenumi}{(\roman{enumi})}
\item $\lfloor c^-(\phi)\rfloor=-\lceil c^+(\phi^{-1})\rceil$.
\item $\lceil c^+(\phi\psi)\rceil\leq\lceil c^+(\phi)\rceil+\lceil c^+(\psi)\rceil$ and $\lfloor c^-(\phi\psi)\rfloor\geq\lfloor c^-(\phi)\rfloor+\lfloor c^-(\psi)\rfloor$.
\end{enumerate}
\end{lemma}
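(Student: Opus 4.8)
The plan is to follow Viterbo's argument from the symplectic case, replacing each use of the exact composition identity (\ref{V}) --- which fails in the contact setting --- by its integer-rounded substitute, Lemma \ref{1per}. Everything rests on the composition rule $\Gamma_{\phi\psi}=\Psi_\phi(\Gamma_\psi)$ together with $\Psi_\phi^{\phantom{\phi}-1}=\Psi_{\phi^{-1}}$ and $\Psi_{\phi^{-1}}(0_B)=\Gamma_{\phi^{-1}}$ from Section \ref{inv}, and on the fact (inherited from the construction of $\Psi_\phi$ in \cite{mio}) that for $\phi\in\text{Cont}_0^{\phantom{0}c}\,(\mathbb{R}^{2n}\times S^1)$ the contactomorphism $\Psi_\phi$ is $1$-periodic in the $\mathbb{R}$-coordinate and isotopic to the identity through $1$-periodic contactomorphisms, so that Lemma \ref{1per} may be applied to it.

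I would prove (i) first. Applying the floor part of Lemma \ref{1per} with $\Psi=\Psi_\phi$, $L=0_B$ and $u=1$, and using $0_B-\Psi_\phi^{\phantom{\phi}-1}(0_B)=\overline{\Gamma_{\phi^{-1}}}$, gives $\lfloor c^-(\phi)\rfloor=\lfloor c(1,\overline{\Gamma_{\phi^{-1}}})\rfloor$. Reading the duality of Lemma \ref{inv_leg}(ii) as $c(1,\bar L)=-c(\mu,L)$ turns the right-hand side into $\lfloor -c(\mu,\Gamma_{\phi^{-1}})\rfloor=\lfloor -c^+(\phi^{-1})\rfloor=-\lceil c^+(\phi^{-1})\rceil$, which is exactly (i).

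For the first inequality of (ii) I would argue in two steps. First, the composition rule and the ceiling part of Lemma \ref{1per} give $\lceil c^+(\phi\psi)\rceil=\lceil c(\mu,\Gamma_\psi-\Gamma_{\phi^{-1}})\rceil$. Subadditivity, Lemma \ref{inv_leg}(i), applied with $u=1$, $v=\mu$ (so $u\cup v=\mu$) and $L_1=\overline{\Gamma_{\phi^{-1}}}$, $L_2=\Gamma_\psi$, yields the real inequality $c(\mu,\Gamma_\psi-\Gamma_{\phi^{-1}})\geq c(1,\overline{\Gamma_{\phi^{-1}}})+c(\mu,\Gamma_\psi)=-c^+(\phi^{-1})+c^+(\psi)$. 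Taking ceilings and invoking the elementary estimates $\lceil x+y\rceil\geq\lceil x\rceil+\lfloor y\rfloor$ and $\lfloor -c^+(\phi^{-1})\rfloor=-\lceil c^+(\phi^{-1})\rceil=\lfloor c^-(\phi)\rfloor$ (the last by (i)) produces the clean lower bound $\lceil c^+(\phi\psi)\rceil\geq\lceil c^+(\psi)\rceil+\lfloor c^-(\phi)\rfloor$, valid for all $\phi,\psi$. Substituting $\phi\mapsto\phi^{-1}$, $\psi\mapsto\phi\psi$ into this bound and using (i) once more, in the form $\lfloor c^-(\phi^{-1})\rfloor=-\lceil c^+(\phi)\rceil$, turns it into the desired $\lceil c^+(\phi\psi)\rceil\leq\lceil c^+(\phi)\rceil+\lceil c^+(\psi)\rceil$. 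The second inequality of (ii) follows formally by applying the first to $(\phi\psi)^{-1}=\psi^{-1}\phi^{-1}$ and rewriting each term through (i).

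The main obstacle is bookkeeping rather than conceptual: with (\ref{V}) unavailable, the invariants cannot be added as real numbers, so every sum has to be performed at the level of floors and ceilings. The delicate point is to choose the rounding inequalities so that the loss incurred at each rounding falls on the favourable side --- this is what forces the asymmetric pairing of $\lceil\cdot\rceil$ with $\lfloor\cdot\rfloor$ in the lower bound above --- and to check carefully that $\Psi_\phi$ genuinely meets the $1$-periodicity hypothesis of Lemma \ref{1per}. Once these are settled, the algebra closes up exactly as in Viterbo's symplectic computation.
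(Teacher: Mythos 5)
Your proposal is correct and follows essentially the same route as the paper: both reduce compositions to sums of Legendrians via Lemma \ref{1per}, then combine Lemma \ref{inv_leg}(i)--(ii) with the elementary ceiling/floor estimates. The only difference is cosmetic --- you apply Lemma \ref{1per} to $\Psi_{\phi}$ and first derive the lower bound $\lceil c^+(\phi\psi)\rceil\geq\lceil c^+(\psi)\rceil+\lfloor c^-(\phi)\rfloor$ before substituting, while the paper applies it to $\Psi_{\phi^{-1}}$ and estimates $\lceil c^+(\psi)\rceil$ directly; the two computations are equivalent.
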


\begin{proof}
\begin{enumerate}
\renewcommand{\labelenumi}{(\roman{enumi})}
\item Note first that $\lceil c(u,\Gamma_{\phi^{-1}})\rceil=\lceil c(u,\overline{\Gamma_{\phi}})\rceil$ for all $u$ (apply Lemma \ref{1per} to $L=0_B$ and $\Psi=\Psi_{\phi^{-1}}$). Using this and Lemma \ref{inv_leg}(ii) we have
$$
\lfloor c^-(\phi)\rfloor =\lfloor c(1,\Gamma_{\phi}) \rfloor=-\lceil c(\mu,\overline{\Gamma_{\phi}})\rceil=-\lceil c(\mu,\Gamma_{\phi^{-1}})\rceil=-\lceil c^+(\phi^{-1})\rceil.
$$
\item We have 
$ c^+(\psi)=c(\mu,\Gamma_{\psi})=c\big(\mu, \Psi_{\phi^{-1}}(\Gamma_{\phi\psi})\big)$ thus by Lemma \ref{1per} it holds
$\lceil c^+(\psi)\rceil=\lceil c\big(\mu,\Gamma_{\phi\psi}-\Psi_{\phi}(0_B)\big)\rceil$. But, by Lemma \ref{inv_leg}(i)-(ii)
$$
c\big(\mu,\Gamma_{\phi\psi}-\Psi_{\phi}(0_B)\big)\geq
c\big(\mu,\Gamma_{\phi\psi}\big)+c\big(1,\overline{\Gamma_{\phi}}\big)=
c^+(\phi\psi)-c^+(\phi).
$$
Thus 
$$\lceil c^+(\psi) \rceil \geq \lceil c^+(\phi\psi)-c^+(\phi) \rceil \geq \lceil c^+(\phi\psi)\rceil-\lceil c^+(\phi) \rceil$$
as we wanted. The statement about $c^-$ follows now from (i).
\end{enumerate}
\end{proof}

Similarly, in the case of $\mathbb{R}^{2n+1}$ we can use Lemma \ref{weaker} to show that $c^-(\phi)=0$ if and only if $c^+(\phi^{-1})=0$ and 
that if $c^{\pm}(\phi)=c^{\pm}(\psi)= 0$ then $c^{\pm}(\phi\psi)= 0$ (see \cite{B}).

A fundamental property of $c^+$ and $c^-$ in the symplectic case is that they are invariant by conjugation, i.e. $c^{\pm}(\phi)=c^{\pm}(\psi\phi\psi^{-1})$ for all $\phi$, $\psi$ in $\text{Ham}^c\,(\mathbb{R}^{2n})$ (see \cite{V} or \cite{mio}). This is a consequence of the fact that the set of critical values of the generating function of a Hamiltonian symplectomorphism $\phi$ of $\mathbb{R}^{2n}$ coincides with the action spectrum of $\phi$, which is invariant by conjugation: if $q$ is a fixed point of $\phi$ then $\psi(q)$ is a fixed point of $\psi\phi\psi^{-1}$ with the same symplectic action. This crucial fact does not hold in the contact case. Given a contactomorphism $\phi=(\phi_1,\phi_2,\phi_3)$ of $\mathbb{R}^{2n+1}$ with $\phi^{\ast}(dz-ydx)=e^g(dz-ydx)$, the critical points of a generating function of $\phi$ coincide with the \textit{translated points} of $\phi$, i.e. the points $q=(x,y,z)$ such that $\phi_1(q)=x$, $\phi_2(q)=y$ and $g(q)=0$. Moreover, the critical value is given by the \textit{contact action} of the corresponding translated point, i.e. the value $\phi_3(q)-z$ (see \cite{B} or \cite{mio}). Note that the contact action is not invariant by conjugation. In fact, not even the property of being a translated point is invariant by conjugation: if $q$ is a translated point of $\phi$ then in general $\psi(q)$ is not a translated point of $\psi\phi\psi^{-1}$. However this is the case if the contact action is $0$, because translated points with contact action $0$ are fixed points of $\phi$. This fact has been used by Bhupal to prove that, for all $\phi$, $\psi$ in $\text{Cont}_0^{\phantom{0}c}\,(\mathbb{R}^{2n+1})$, $c^{\pm}(\phi)=0$ if and only if $c^{\pm}(\psi\phi\psi^{-1})=0$. As for Lemma \ref{weaker}, the idea of the proof is to study the bifurcation diagram of a 1-parameter family $S_t$ of generating functions for $\psi_t\phi\psi_t^{\phantom{t}-1}$, where $\psi_t$ is a contact isotopy connecting $\psi$ to the identity, and to show that there can be no path $c_t$ of critical values for $S_t$ crossing the critical value $0$. As observed in \cite{mio}, in the 1-periodic case the same argument can also be applied if we replace $0$ by any other integer, to show that the integer part of $c^+$ and $c^-$ is invariant by conjugation.

\begin{lemma}[\cite{mio}]\label{conj_mio}
For all $\phi$, $\psi$ in $\text{Cont}_0^{\phantom{0}c}\,(\mathbb{R}^{2n}\times S^1)$ it holds that $\lceil c^{\pm}(\phi)\rceil=\lceil c^{\pm}(\psi\phi\psi^{-1})\rceil$ and $\lfloor c^{\pm}(\phi)\rfloor=\lfloor c^{\pm}(\psi\phi\psi^{-1})\rfloor$.
\end{lemma}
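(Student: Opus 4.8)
The plan is to reduce the conjugation invariance of the integer parts of $c^{\pm}$ to a statement about a one-parameter family of generating functions, following the strategy used in the proofs of Lemma \ref{weaker} and Lemma \ref{1per}. The starting observation is that for a contactomorphism $\phi$ with $\phi^{\ast}(dz-ydx)=e^g(dz-ydx)$, the critical points of a generating function for $\Gamma_{\phi}$ are exactly the translated points of $\phi$ (those $q$ with $\phi_1(q)=x$, $\phi_2(q)=y$, $g(q)=0$), and the associated critical value is the contact action $\phi_3(q)-z$. Crucially, translated points whose contact action is \emph{an integer} are precisely the fixed points of $\phi$ composed with the appropriate $z$-translation, and in the $1$-periodic setting this integer-action condition is preserved under conjugation: if $q$ is a translated point of $\phi$ with integer action $k$, then $\psi(q)$ is a translated point of $\psi\phi\psi^{-1}$ with the same integer action $k$. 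This is the contact replacement, available only at integer values, for the conjugation-invariance of the action spectrum that holds everywhere in the symplectic case.

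First I would fix $\phi$ and $\psi$, choose a contact isotopy $\psi_t$ with $\psi_0=\mathrm{id}$ and $\psi_1=\psi$, and set $\phi_t:=\psi_t\phi\psi_t^{\phantom{t}-1}$, so that $\phi_0=\phi$ and $\phi_1=\psi\phi\psi^{-1}$. By the existence theorem for generating functions quadratic at infinity I would take a smooth family $S_t:E\rightarrow\mathbb{R}$ of g.f.q.i. for the family of Legendrian submanifolds $\Gamma_{\phi_t}$, which one may arrange to be continuous in $t$; the Uniqueness Theorem guarantees that the invariants $c(u,\Gamma_{\phi_t})$ computed from $S_t$ are independent of the chosen generating function. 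I would then consider the functions $t\mapsto c^{+}(\phi_t)=c(\mu,\Gamma_{\phi_t})$ and $t\mapsto c^{-}(\phi_t)=c(1,\Gamma_{\phi_t})$ and study their bifurcation diagram, i.e. the set of critical values of $S_t$ as $t$ varies.

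The heart of the argument is to show that no continuous path $c_t$ of critical values of $S_t$ can cross an integer value. Suppose for contradiction that some branch $c_t$ of critical values passes through an integer $k$ at a parameter $t_0$. The corresponding critical point $e_{t_0}$ of $S_{t_0}$ gives a translated point $q$ of $\phi_{t_0}=\psi_{t_0}\phi\psi_{t_0}^{\phantom{t}-1}$ with contact action exactly $k$, hence (using $1$-periodicity) a genuine fixed-point-type configuration. By the conjugation-preservation of integer-action translated points recorded above, this forces a corresponding integer-action translated point of $\phi$ itself, and conversely; tracking this across the family shows the integer $k$ is either a critical value of $S_t$ for all $t$ or for no $t$, so a branch cannot genuinely traverse it. Consequently $\lceil c^{\pm}(\phi_t)\rceil$ and $\lfloor c^{\pm}(\phi_t)\rfloor$ are constant in $t$, and evaluating at $t=0$ and $t=1$ yields $\lceil c^{\pm}(\phi)\rceil=\lceil c^{\pm}(\psi\phi\psi^{-1})\rceil$ and likewise for $\lfloor\cdot\rfloor$.

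The main obstacle, as in Bhupal's original argument, is making the bifurcation analysis rigorous: a g.f.q.i. need not have isolated nondegenerate critical points, so the ``branches'' $c_t$ are not a priori smooth curves, and one must control how critical values can appear, disappear, or merge as $t$ varies. The clean way around this is not to track individual critical points but to argue directly at the level of the invariant, showing that $c(u,\Gamma_{\phi_t})$ can equal an integer $k$ only when $\Gamma_{\phi_t}$ meets the $k$-shifted $0$-wall, a condition that is genuinely conjugation-invariant because it corresponds to an honest fixed point rather than a mere translated point. Reducing everything to this intersection-theoretic statement, rather than to the behavior of critical values, is what makes the integer parts—but not $c^{\pm}$ themselves—invariant, and is precisely the point where $1$-periodicity is indispensable. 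Since this is exactly the mechanism already established in the proofs of Lemmas \ref{weaker} and \ref{1per}, I would cite those arguments and indicate the modification replacing $0$ by a general integer, as is done in \cite{mio}.
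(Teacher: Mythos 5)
Your proposal follows essentially the same route as the paper: the paper (quoting \cite{mio} and Bhupal's argument) likewise reduces conjugation invariance of the integer parts to a bifurcation analysis of a $1$-parameter family $S_t$ of generating functions for $\psi_t\phi\psi_t^{\phantom{t}-1}$, using the fact that in the $1$-periodic setting translated points with integer contact action are genuine fixed points and hence are preserved under conjugation, so no path of critical values can cross an integer. There are no substantive differences between your sketch and the argument the paper invokes.
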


\begin{rmk}\label{lift}
Every Hamiltonian symplectomorphism $\varphi$ of $\mathbb{R}^{2n}$ can be lifted to a contactomorphism $\widetilde{\varphi}$ of $\mathbb{R}^{2n+1}$ or $\mathbb{R}^{2n}\times S^1$ by defining $\widetilde{\varphi}(x,y,z)=\big(\varphi_1(x,y),\varphi_2(x,y), z+F(x,y)\big)$ where $\varphi=(\varphi_1,\varphi_2)$ and $F$ is the compactly supported function satisfying $\varphi^{\ast}(ydx)-ydx=dF$. It can be proved (see \cite{mio}) that $c^+(\widetilde{\varphi})=c^+(\varphi)$ and $c^-(\widetilde{\varphi})=c^-(\varphi)$.
\end{rmk}

\section{The bi-invariant metric $d$ on $\text{Cont}_0^{\;c}\,(\mathbb{R}^{2n}\times S^1)$}\label{metric}

In \cite{V} Viterbo used the invariants $c^+$ and $c^-$ to construct a bi-invariant partial order $\leq_V$ and a bi-invariant metric $d_V$ on $\text{Ham}^c\,(\mathbb{R}^{2n})$, and a symplectic capacity for domains in $\mathbb{R}^{2n}$. Bhupal showed in \cite{B} that the weaker properties of $c^+$ and $c^-$ that are still satisfied in the case of $\text{Cont}_0^{\phantom{0}c}\,(\mathbb{R}^{2n+1})$ are in fact enough to extend the Viterbo partial order to that group. Note that, by Lemma \ref{inv_per} and Lemma \ref{conj_mio}, these same properties are also satisfied by elements of $\text{Cont}_0^{\phantom{0}c}\,(\mathbb{R}^{2n}\times S^1)$ so that Bhupal's contruction can be applied to the case of $\mathbb{R}^{2n}\times S^1$ as well (see \cite{mio}). However, we will now show that Lemma \ref{inv_per} and Lemma \ref{conj_mio}, that are only available in the 1-periodic case, allow us to extend also the Viterbo metric to $\text{Cont}_0^{\phantom{0}c}\,(\mathbb{R}^{2n}\times S^1)$.\\
\\
Recall that the Viterbo metric on $\text{Ham}^c\,(\mathbb{R}^{2n})$ is defined by $d_V\,(\phi,\psi):=c^+(\phi\psi^{-1})-c^-(\phi\psi^{-1})$. Similarly, our metric $d$ on $\text{Cont}_0^{\phantom{0}c} (\mathbb{R}^{2n}\times S^1)$ is defined by
$$d\,(\phi,\psi):=\lceil c^+(\phi\psi^{-1})\rceil-\lfloor c^-(\phi\psi^{-1})\rfloor.$$

\begin{prop}\label{metric_c}
$d$ is a bi-invariant metric on $\text{Cont}_0^{\phantom{0}c} (\mathbb{R}^{2n}\times S^1)$, i.e.
\begin{enumerate}
\renewcommand{\labelenumi}{(\roman{enumi})}
\item (positivity) $d\,(\phi,\psi)\geq 0$ for all $\phi$, $\psi$.
\item (non-degeneracy) $d\,(\phi,\psi)=0$ if and only if $\phi=\psi$.
\item (symmetry) $d\,(\phi,\psi)=d\,(\psi,\phi)$.
\item (triangle inequality) $d\,(\phi,\varphi)\leq d\,(\phi,\psi) + d\,(\psi,\varphi)$
\item (bi-invariance) $d\,(\phi\varphi,\psi\varphi)=d\,(\varphi\phi,\varphi\psi)=d\,(\phi,\psi)$.
\end{enumerate}
\end{prop}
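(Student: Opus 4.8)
The plan is to verify the five axioms (i)--(v) directly from the three properties of $c^+$ and $c^-$ recorded in Lemmas \ref{inv_sympl}, \ref{inv_per} and \ref{conj_mio}, so that the only real content is careful bookkeeping with the ceiling and floor functions; no further symplectic or contact input is needed, since the hard analytic work is already packaged inside those lemmas. Throughout I would abbreviate $\chi:=\phi\psi^{-1}$ and manipulate $\lceil c^+(\chi)\rceil$ and $\lfloor c^-(\chi)\rfloor$ directly.

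For positivity (i) I would note that Lemma \ref{inv_sympl}(i) gives $c^+(\chi)\geq 0$ and $c^-(\chi)\leq 0$, hence $\lceil c^+(\chi)\rceil\geq 0$ and $\lfloor c^-(\chi)\rfloor\leq 0$, so $d(\phi,\psi)=\lceil c^+(\chi)\rceil-\lfloor c^-(\chi)\rfloor\geq 0$. For non-degeneracy (ii), the vanishing $d(\phi,\psi)=0$ forces both nonnegative summands to vanish, i.e. $\lceil c^+(\chi)\rceil=0$ and $\lfloor c^-(\chi)\rfloor=0$. Combining $\lceil c^+(\chi)\rceil=0$ with $c^+(\chi)\geq 0$ pins down $c^+(\chi)=0$, and combining $\lfloor c^-(\chi)\rfloor=0$ with $c^-(\chi)\leq 0$ pins down $c^-(\chi)=0$; Lemma \ref{inv_sympl}(ii) then yields $\chi=\mathrm{id}$, that is $\phi=\psi$. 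The converse direction is immediate since $c^\pm(\mathrm{id})=0$.

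For symmetry (iii) the key observation is that $\psi\phi^{-1}=\chi^{-1}$, so I must compare $\lceil c^+(\chi)\rceil-\lfloor c^-(\chi)\rfloor$ with $\lceil c^+(\chi^{-1})\rceil-\lfloor c^-(\chi^{-1})\rfloor$. Here I would apply Lemma \ref{inv_per}(i) twice, namely $\lfloor c^-(\chi)\rfloor=-\lceil c^+(\chi^{-1})\rceil$ and $\lfloor c^-(\chi^{-1})\rfloor=-\lceil c^+(\chi)\rceil$; substituting, both expressions collapse to $\lceil c^+(\chi)\rceil+\lceil c^+(\chi^{-1})\rceil$, which gives the equality. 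For the triangle inequality (iv) I would set $\alpha:=\phi\psi^{-1}$ and $\beta:=\psi\varphi^{-1}$, so that $\phi\varphi^{-1}=\alpha\beta$, and then add the two subadditivity estimates of Lemma \ref{inv_per}(ii): the ceiling bound $\lceil c^+(\alpha\beta)\rceil\leq\lceil c^+(\alpha)\rceil+\lceil c^+(\beta)\rceil$ together with the floor bound rewritten with signs reversed, $-\lfloor c^-(\alpha\beta)\rfloor\leq-\lfloor c^-(\alpha)\rfloor-\lfloor c^-(\beta)\rfloor$. Summing these is precisely $d(\phi,\varphi)\leq d(\phi,\psi)+d(\psi,\varphi)$.

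Finally, for bi-invariance (v), right invariance is purely algebraic: $(\phi\varphi)(\psi\varphi)^{-1}=\phi\psi^{-1}$, so the defining quantity is literally unchanged. Left invariance is the only place where conjugation enters, and it is the step I expect to carry the essential weight: $(\varphi\phi)(\varphi\psi)^{-1}=\varphi(\phi\psi^{-1})\varphi^{-1}=\varphi\chi\varphi^{-1}$ is conjugate to $\chi$, and Lemma \ref{conj_mio} gives $\lceil c^+(\varphi\chi\varphi^{-1})\rceil=\lceil c^+(\chi)\rceil$ and $\lfloor c^-(\varphi\chi\varphi^{-1})\rfloor=\lfloor c^-(\chi)\rfloor$, whence $d(\varphi\phi,\varphi\psi)=d(\phi,\psi)$. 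I expect the only genuine subtlety to be in (ii) and (iii), where the one-sided roundings must be matched against the sign constraints $c^+\geq 0$ and $c^-\leq 0$; everything else is formal, and the fact that symmetry and left invariance rely exactly on the weaker integer-valued statements \ref{inv_per}(i) and \ref{conj_mio} is precisely what makes the $1$-periodic setting of $\mathbb{R}^{2n}\times S^1$ the right one for this construction.
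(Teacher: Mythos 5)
Your proposal is correct and follows essentially the same route as the paper's proof: positivity and non-degeneracy from Lemma \ref{inv_sympl}, symmetry from Lemma \ref{inv_per}(i), the triangle inequality from Lemma \ref{inv_per}(ii) via the decomposition $\phi\varphi^{-1}=(\phi\psi^{-1})(\psi\varphi^{-1})$, right invariance by cancellation, and left invariance from Lemma \ref{conj_mio}. The only difference is that you spell out the rounding arguments in (ii) and (iii) in slightly more detail than the paper does.
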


\begin{proof}
Positivity and symmetry follow from Lemma \ref{inv_sympl}(i) and Lemma \ref{inv_per}(i) respectively. 
Using Lemma \ref{inv_per}(ii) we have
\begin{eqnarray*}
d(\phi,\varphi)&=&\lceil c^+(\phi\varphi^{-1})\rceil-\lfloor c^-(\phi\varphi^{-1})\rfloor=
\lceil c^+(\phi\psi^{-1}\psi\varphi^{-1})\rceil-\lfloor c^-(\phi\psi^{-1}\psi\varphi^{-1})\rfloor\\
&\leq & \lceil c^+(\phi\psi^{-1})\rceil+\lceil c^+(\psi\varphi^{-1})\rceil-\lfloor c^-(\phi\psi^{-1})\rfloor-\lfloor c^-(\psi\varphi^{-1})\rfloor=
d\,(\phi,\psi) + d\,(\psi,\varphi)
\end{eqnarray*}
proving the triangle inequality. By Lemma \ref{inv_sympl}(ii) we have $c^+(\text{id})=c^-(\text{id})=0$, thus $d\,(\phi,\phi)=0$. Suppose now that $d\,(\phi,\psi)=0$. Then, because of Lemma \ref{inv_sympl}(i), we must have $c^+(\phi\psi^{-1})=c^-(\phi\psi^{-1})=0$ and so $\phi=\psi$ by Lemma \ref{inv_sympl}(ii). This proves non-degeneracy. As for bi-invariance, we have
$$d\,(\phi\varphi,\psi\varphi)=
\lceil c^+(\phi\varphi\varphi^{-1}\psi^{-1})\rceil -\lfloor c^-(\phi\varphi\varphi^{-1}\psi^{-1})\rfloor=
\lceil c^+(\phi\psi^{-1})\rceil-\lfloor c^-(\phi\psi^{-1})\rfloor=d\,(\phi,\psi)$$
and, by Lemma \ref{conj_mio},
$$d\,(\varphi\phi,\varphi\psi)=
\lceil c^+(\varphi\phi\psi^{-1}\varphi^{-1})\rceil -\lfloor c^-(\varphi\phi\psi^{-1}\varphi^{-1})\rfloor=
\lceil c^+(\phi\psi^{-1})\rceil -\lfloor c^-(\phi\psi^{-1})\rfloor=d\,(\phi,\psi).$$
\end{proof}

The \textbf{energy} of an element $\phi$ of $\text{Cont}_0^{\phantom{0}c} (\mathbb{R}^{2n}\times S^1)$ is defined to be its distance to the identity, i.e.
$$E(\phi):=\lceil c^+(\phi)\rceil-\lfloor c^-(\phi)\rfloor.$$ 
Given an open and bounded domain $\mathcal{V}$ of 
$\mathbb{R}^{2n}\times S^1$, its \textbf{displacement energy} is defined by
$$ E(\mathcal{V}):=\text{inf}\;\{\; E(\psi) \;|\; \psi(\mathcal{V})\cap\mathcal{V}= \emptyset\;\}.$$
This definition can be extended to arbitrary domains of $\mathbb{R}^{2n}\times S^1$ by setting
$E(\mathcal{U})=\text{sup}\,\{\,E(\mathcal{V}) \;|\; \mathcal{V}\subset\mathcal{U}, \:\mathcal{V} \:\text{bounded}\,\}$
if $\mathcal{U}$ is open, and
$ E(A)=\text{inf}\,\{\,E(\mathcal{U}) \;|\; \mathcal{U} \:\text{open,}\;A\subset\mathcal{U}\,\}$
for an arbitrary domain $A$. In \cite{mio} we extended the Viterbo capacity to domains of $\mathbb{R}^{2n}\times S^1$ by defining 
$c(\mathcal{V})=\text{sup}\,\{\,\lceil c^+(\phi)\rceil \:|\: \phi\in\text{Cont}\,(\mathcal{V})\,\}$ 
where $\text{Cont}\,(\mathcal{V})$ denotes the set of time-1 maps of contact Hamiltonians supported in $\mathcal{V}$. The \textbf{energy-capacity inequality}
$$c(\mathcal{V})\leq E(\mathcal{V})$$
follows from \cite[3.6.1]{mio}.

\section{Relation with the Bhupal partial order}\label{order}

Recall from \cite{B} and \cite{mio} that, similarly to the symplectic case, the \textbf{Bhupal partial order} $\leq_B$ on $\text{Cont}_0^{\phantom{0}c}\,(\mathbb{R}^{2n+1})$ and $\text{Cont}_0^{\phantom{0}c}\,(\mathbb{R}^{2n}\times S^1)$ is defined by
$$ \phi_1 \leq_B \phi_2 \quad \text{if} \quad c^+(\phi_1\phi_2^{\phantom{2}-1})=0.$$
We will now show that the metric $d$ and the partial order $\leq_B$ are compatible.

\begin{prop}\label{c_part_ord_metric}
$\big(\,\text{Cont}_0^{\phantom{0}c} (\mathbb{R}^{2n}\times S^1),d,\leq_B\big)$ is a partially ordered metric space.
\end{prop}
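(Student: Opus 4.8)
The plan is to verify the defining property of a partially ordered metric space recalled in the introduction's footnote: for any $\phi_1 \leq_B \phi_2 \leq_B \phi_3$ I must show that $d(\phi_1,\phi_2) \leq d(\phi_1,\phi_3)$. First I would set $\alpha := \phi_1\phi_2^{-1}$ and $\beta := \phi_2\phi_3^{-1}$, so that $\phi_1\phi_3^{-1}=\alpha\beta$, and rewrite both distances directly from the definition of $d$ (equivalently, via bi-invariance, as energies): $d(\phi_1,\phi_2)=\lceil c^+(\alpha)\rceil-\lfloor c^-(\alpha)\rfloor$ and $d(\phi_1,\phi_3)=\lceil c^+(\alpha\beta)\rceil-\lfloor c^-(\alpha\beta)\rfloor$. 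By the definition of the Bhupal order, the two hypotheses $\phi_1\leq_B\phi_2$ and $\phi_2\leq_B\phi_3$ translate into $c^+(\alpha)=0$ and $c^+(\beta)=0$, so the whole statement reduces to the single inequality $E(\alpha)\leq E(\alpha\beta)$ between the energies of $\alpha$ and $\alpha\beta$.

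Next I would exploit these vanishing conditions to simplify both sides. Since $c^+(\alpha)=0$ we have $\lceil c^+(\alpha)\rceil=0$, hence $E(\alpha)=-\lfloor c^-(\alpha)\rfloor$. On the other side, $\lceil c^+(\alpha\beta)\rceil\geq 0$ by Lemma \ref{inv_sympl}(i), so dropping this non-negative term yields $E(\alpha\beta)\geq -\lfloor c^-(\alpha\beta)\rfloor$. Thus it suffices to prove the one inequality $\lfloor c^-(\alpha\beta)\rfloor \leq \lfloor c^-(\alpha)\rfloor$, after which chaining the estimates gives $E(\alpha\beta)\geq -\lfloor c^-(\alpha\beta)\rfloor \geq -\lfloor c^-(\alpha)\rfloor = E(\alpha)$, as desired.

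The heart of the argument is this last inequality, a one-sided ``triangle inequality'' for $\lfloor c^-\rfloor$ that holds precisely because $c^+(\beta)=0$. I would write $\alpha=(\alpha\beta)\beta^{-1}$ and apply the truncated subadditivity of Lemma \ref{inv_per}(ii) to obtain $\lfloor c^-(\alpha)\rfloor \geq \lfloor c^-(\alpha\beta)\rfloor + \lfloor c^-(\beta^{-1})\rfloor$; then Lemma \ref{inv_per}(i) gives $\lfloor c^-(\beta^{-1})\rfloor = -\lceil c^+(\beta)\rceil = 0$, completing the estimate. The main point—and the only genuine subtlety—is that the exact subadditivity of $c^-$ available in the symplectic case fails here, so I must work throughout with the integer-truncated inequalities of Lemma \ref{inv_per}; the fact that $c^+(\beta)=0$ makes $\lfloor c^-(\beta^{-1})\rfloor$ vanish \emph{exactly} is what prevents the truncation from introducing an error that would spoil the monotonicity. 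This is also where the 1-periodic (integer-valued) setting is essential, since it is exactly the 1-periodicity that validates Lemma \ref{inv_per}.
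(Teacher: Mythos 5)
Your proof is correct and is essentially the paper's argument in dual form: where the paper reduces $d(\phi_1,\phi_2)$ to $\lceil c^+(\phi_2\phi_1^{\phantom{1}-1})\rceil$ and applies the truncated subadditivity of $\lceil c^+\rceil$ from Lemma \ref{inv_per}(ii) to the factorization $\phi_2\phi_1^{\phantom{1}-1}=(\phi_2\phi_3^{\phantom{3}-1})(\phi_3\phi_1^{\phantom{1}-1})$, you apply the equivalent superadditivity of $\lfloor c^-\rfloor$ to the inverse factorization $\alpha=(\alpha\beta)\beta^{-1}$, the two being interchangeable via Lemma \ref{inv_per}(i). The only (minor) difference is that you never need $c^+(\phi_1\phi_3^{\phantom{3}-1})=0$, i.e.\ transitivity of $\leq_B$, settling for the inequality $\lceil c^+(\alpha\beta)\rceil\geq 0$ where the paper uses an exact vanishing.
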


\begin{proof}
Suppose that $\phi_1\leq_B\phi_2\leq_B\phi_3$. Then, since $c^+(\phi_1\phi_2^{\phantom{2}-1})=c^+(\phi_2\phi_3^{\phantom{3}-1})=c^+(\phi_1\phi_3^{\phantom{3}-1})=0$, using Lemma \ref{inv_per}(i) and (ii) we get
\begin{eqnarray*}
d(\phi_1,\phi_2)&=&\lceil c^+(\phi_1\phi_2^{\phantom{2}-1})\rceil-\lfloor c^-(\phi_1\phi_2^{\phantom{2}-1})\rfloor\\
&=&-\lfloor c^-(\phi_1\phi_2^{\phantom{2}-1})\rfloor=\lceil c^+(\phi_2\phi_1^{\phantom{1}-1})\rceil \leq \lceil c^+(\phi_2\phi_3^{\phantom{3}-1})\rceil+\lceil c^+(\phi_3\phi_1^{\phantom{1}-1})\rceil=\lceil c^+(\phi_3\phi_1^{\phantom{1}-1})\rceil\\
&=&
\lceil c^+(\phi_1\phi_3^{\phantom{3}-1})\rceil+\lceil c^+(\phi_3\phi_1^{\phantom{1}-1})\rceil=\lceil c^+(\phi_1\phi_3^{\phantom{3}-1})\rceil-\lfloor c^-(\phi_1\phi_3^{\phantom{3}-1})\rfloor=d(\phi_1,\phi_3)
\end{eqnarray*}
i.e. $d(\phi_1,\phi_2)\leq d(\phi_1,\phi_3)$.
\end{proof}

Consider now the relation $\preceq$ on $\text{Cont}_0^{\phantom{0}c} (\mathbb{R}^{2n}\times S^1)$ or $\text{Cont}_0^{\phantom{0}c} (\mathbb{R}^{2n+1})$ defined by setting $\phi_1\preceq\phi_2$ if $\phi_2\phi_1^{-1}$ can be written as the time-1 flow of a non-negative Hamiltonian. This relation is clearly reflexive and transitive. The deep fact that $\preceq$ is also anti-symmetric (hence a partial order) follows from antisymmetry of $\leq_B$ and the implication
\begin{equation}\label{e}
\phi_1\preceq\phi_2 \quad \Rightarrow \quad \phi_1\leq_B\phi_2.
\end{equation}
In the language of Eliashberg and Polterovich \cite{EP}, antisymmetry of $\preceq$ proves that $\mathbb{R}^{2n}\times S^1$ and  $\mathbb{R}^{2n+1}$ are orderable contact manifolds. The implication (\ref{e}) can easily be proved using the fact that $c^+$ and $c^-$ are monotone with respect to $\preceq$ : if $\phi_1\preceq\phi_2$ then $c^+(\phi_1)\leq c^+(\phi_2)$ and $c^-(\phi_1)\leq c^-(\phi_2)$ (see \cite{B} or \cite{mio}). Note that an analogous relation $\preceq$ is also defined in $\text{Ham}^c\,(\mathbb{R}^{2n})$ by setting $\phi_1\preceq\phi_2$ if $\phi_2\phi_1^{-1}$ can be written as the time-1 flow of a non-negative Hamiltonian. Then $\phi_1\leq_V\phi_2$ if 
$\phi_1\preceq\phi_2$, proving that $\preceq$ is a partial order (see \cite{V}).\\
\\
Notice that (\ref{e}) and Proposition \ref{c_part_ord_metric} immediately imply that $\big(\,\text{Cont}_0^{\phantom{0}c} (\mathbb{R}^{2n}\times S^1),d,\preceq\big)$ is a also partially ordered metric space, so that in particular the energy does not decrease along non-negative contact isotopies.

\section{Unboundedness of $d$}

We will now show that the diameter of $\text{Cont}_0^{\phantom{0}c} (\mathbb{R}^{2n}\times S^1)$ with respect to our metric is infinite, or in other words that $d$ is unbounded. As we will see, this fact follows immediately from the analogous result in the symplectic case and Remark \ref{lift}. \\
\\
Unboundedness of the Viterbo metric on $\text{Ham}^c\,(\mathbb{R}^{2n})$ is well known. It can be seen for instance by considering the sequence of Hamiltonian symplectomorphisms of $\mathbb{R}^{2n}$ supported in $B^{2n}(R)= \{\,\pi \sum_{i=1}^n x_i^{\,2} + y_i^{\,2} <R\,\}$ that was constructed by Traynor in \cite{T} to calculate the symplectic homology of $B^{2n}(R)$, and by noticing that the energy of the elements of this sequence tends to $R$ (that can be chosen to be arbitrarily big). Traynor's sequence $\phi^{\rho_1}\preceq\phi^{\rho_2}\preceq\phi^{\rho_3}\preceq\cdots$ is constructed as follows. Let $H:\mathbb{R}^{2n}\rightarrow\mathbb{R}$ be the function $H(x_1,y_1,\cdots,x_n,y_n)=\sum_{i=1}^{n} \frac{\pi}{R}(x_i^{\phantom{i}2}+y_i^{\phantom{i}2})$ and consider $H_{\rho}=\rho\circ H$, where $\rho:[0,\infty)\rightarrow [0,\infty)$ is a function supported in $[0,1]$ with $\rho''>0$. Take then a sequence $\rho_1$, $\rho_2$, $\rho_3$, $\cdots$ of functions of this form, with $\lim_{i \to \infty} \rho_i(0) = \infty$, $\lim_{i \to \infty} \rho'_i(0) = -\infty$, and such that
$H_{\rho_1}\leq H_{\rho_2}\leq H_{\rho_3}\leq\cdots$ with $H_{\rho_i}$ getting pointwise arbitrarily big on $B^{2n}(R)$. Since the $H_{\rho_i}$ are positive, by monotonicity of $c^-$ we have that $c^-(\phi^{\rho_i})=0$ and thus $E(\phi^{\rho_i})=c^+(\phi^{\rho_i})$. Moreover, it was proved by Traynor \cite{T} that $c^+(\phi^{\rho_i})$ tends to $R$ for $i\rightarrow \infty$. If we now lift the sequence $\phi^{\rho_1}\leq\phi^{\rho_2}\leq\phi^{\rho_3}\leq\cdots$ to $\mathbb{R}^{2n}\times S^1$ as explained in Remark \ref{lift}, we get a sequence of contactomorphisms whose energy tends to the integer part of $R$, which can be chosen arbitrarily big. It follows thus that $\text{Cont}_0^{\phantom{0}c} (\mathbb{R}^{2n}\times S^1)$ is also not bounded.\\
\\
The following terminology is taken from \cite{BIP}. Two norms on a group $G$ are said to be \textit{equivalent} if their ratio is bounded away from $0$ and $\infty$. In particular, a norm $ \nu$ on $G$ is equivalent to the trivial one (i.e. the norm that is everywhere $1$ except at the identity) if and only if it is bounded and not \textit{fine}. A norm $\nu$ on $G$ is called fine if $0$ is a limit point of $\nu(G)$. Since the metric $d$ on $\text{Cont}_0^{\phantom{0}c} (\mathbb{R}^{2n}\times S^1)$ takes values in $\mathbb{Z}$, it is not fine. However, being unbounded, it is not equivalent to the trivial one. An unbounded norm $\nu$ on a group $G$ is called \textit{stably unbounded} if $\lim_{n \to \infty} \frac{\nu(f^n)}{n}\neq 0$ for some $f$ in $G$. Note that, by definition of the capacity $c$, for every $\phi$ in $\text{Cont}_0^{\phantom{0}c} (\mathbb{R}^{2n}\times S^1)$ which is the time-1 flow of a Hamiltonian supported in $\mathcal{V}$ we have that
$E(\phi)=\lceil c^+(\phi)\rceil + \lceil c^+(\phi^{-1})\rceil\leq 2\,c(\mathcal{V})$. If $\phi$ is generated by a Hamiltonian supported in $\mathcal{V}$ then so is $\phi^n$ as well, thus $\lim_{n \to \infty} \frac{E(\phi^n)}{n}= 0$ for all $\phi$. Hence $\text{Cont}_0^{\phantom{0}c} (\mathbb{R}^{2n}\times S^1)$ is not stably unbounded. Similarly, $\text{Ham}^c\,(\mathbb{R}^{2n})$  is unbounded but not stably unbounded with respect to the Viterbo metric\footnote{The same also holds for $\text{Ham}^c\,(\mathbb{R}^{2n})$ with respect to the Hofer metric \cite{S3}.}.\\
\\
We conclude by discussing the case of the contact manifold $\big(S^1,\text{ker}(dz)\big)$, that can be seen as $\big(\mathbb{R}^{2n}\times S^1,\text{ker}(dz-ydx)\big)$ for $n=0$. It was proved in \cite{BIP} that the diffeomorphism group of $S^1$ does not admit any non-trivial (up to equivalence) bi-invariant metric. However, the construction of our metric $d$ on $\mathbb{R}^{2n}\times S^1$ does not contradict this result. First of all notice that, in the case $n=0$, $d$ is defined on the group of 1-periodic contactomorphisms of $\mathbb{R}$. While in higher dimension compactly supported contactomorphisms of $\mathbb{R}^{2n}\times S^1$ can be seen as 1-periodic contactomorphisms of $\mathbb{R}^{2n+1}$, there is no canonical way to do that if $n=0$ (we have namely that the group of 1-periodic contactomorphisms of $\mathbb{R}$ is the universal cover of the contactomorphism group of $S^1$). Moreover, as we will now explain, our construction does not give a metric even on the group of 1-periodic contactomorphisms of $\mathbb{R}$. Note that for a diffeomorphism $\phi$ of $\mathbb{R}$ it holds that $\phi^{\ast}dz=\phi'dz$, thus $\phi$ is a contactomorphism if and only if $\phi'>0$ i.e. if and only if $\phi$ is orientation preserving. Let $\phi$ be an orientation preserving 1-periodic diffeomorphism of $\mathbb{R}$. We can then associate to $\phi$ a Legendrian submanifold $\Gamma_{\phi}$ of $J^1\mathbb{R}$ by defining $\Gamma_{\phi}=\tau\circ\text{gr}_{\phi}$, where $\tau:\mathbb{R}^3\rightarrow J^1\mathbb{R}$ is the contact embedding 
$$\tau(z,Z,\theta)=\big(\,z,e^{\theta}-1,Z-z\,\big).$$
Thus $\Gamma_{\phi}:\mathbb{R}\rightarrow J^1\mathbb{R}$ is given by $\Gamma_{\phi}(q)=\big(\,q,\phi'(q)-1,\phi(q)-q\,\big)$. We have that $\Gamma_{\phi}$ is the 1-jet of the function $S:\mathbb{R}\rightarrow\mathbb{R}$, $S(q)=\phi(q)-q$ thus in other words $S$ is a generating function for $\phi$. Notice that $c^+(\phi)=\text{max}(S)$ is not necessarily non-negative, and $c^-(\phi)=\text{min}(S)$ not necessarily non-positive. Hence if we define $d$ as in Section \ref{metric} we do not get a metric in this case, because positivity fails. Notice that the proof of non-negativity of $c^+(\phi)$ and non-positivity of $c^-(\phi)$ in Lemma \ref{inv_sympl}(i) used in a crucial way the possibility of choosing a point outside the support of $\phi$. This cannot be done in general in the $S^1$ case.


\begin{thebibliography}{99}

\bibitem[Bh01]{B}
M.~Bhupal, A partial order on the group of contactomorphisms of $\mathbb{R}^{2n+1}$ via generating functions, \textit{Turkish J. Math.} \textbf{25} (2001), 125--135.

\bibitem[BP94]{BP} M.~Bialy and L.~Polterovich, Geodesics of Hofer's metric on the group of Hamiltonian diffeomorphisms, \textit{Duke Math. J.} \textbf{76}(1994), 273--292.

\bibitem[BIP08]{BIP} D.~Burago, S.~Ivanov and L.~Polterovich, Conjugation-invariant norms on groups of geometric origin, in \textit{Groups of Diffeomorphisms: In Honor of Shigeyuki Morita on the Occasion of His 60th Birthday}, Advanced Studies in Pure Mathematics 52, Math. Society of Japan, Tokyo, 2008, pp. 221-250.

\bibitem[Chap84]{Ch1}
M.~Chaperon, Une id\'{e}e du type \textquotedblleft g\'{e}od\'{e}siques bris\'{e}es\textquotedblright  pour les syst\'{e}mes hamiltoniens, \textit{C. R. Acad. Sci. Paris}, S\'{e}r. I Math. \textbf{298} (1984), 293--296. 

\bibitem[Chap95]{Ch} 
M.~Chaperon, On generating families, in \textit{The Floer Memorial Volume} (H. Hofer et al., eds.),
(Progr. Math., vol. 133) Birkhauser, Basel 1995, pp. 283--296.

\bibitem[Chek96]{C}
Y.~Chekanov, Critical points of quasi-functions and generating families of Legendrian manifolds, \textit{Funct. Anal. Appl.} \textbf{30} (1996), 118--128.

\bibitem[EKP06]{EKP}
Y.~Eliashberg, S.S.~Kim and L.~Polterovich, Geometry of contact transformations and domains: orderability vs squeezing, \textit{Geom. and Topol.} \textbf{10} (2006), 1635--1747.

\bibitem[EP00]{EP}
Y.~Eliashberg and L.~Polterovich, Partially ordered groups and geometry of contact transformations, \textit{Geom. Funct. Anal.} \textbf{10} (2000), 1448--1476.

\bibitem[Gr85]{Gr} M.~Gromov, Pseudoholomorphic curves in symplectic manifolds, \textit{Invent.Math.} \textbf{82} (1985), 307--347.

\bibitem[Hof90]{H} H.~Hofer, On the topological properties of symplectic maps,  \textit{Proc. Roy. Soc. Edinburgh}  \textbf{115}  (1990), 25--38.

\bibitem[HZ]{HZ}
H.~Hofer and E.~Zehnder, \textit{Symplectic Invariants and Hamiltonian Dynamics}, Birkh\"{a}user, 1994.

\bibitem[LM95]{LM} F.~Lalonde and D.~McDuff, The geometry of symplectic energy, \textit{Ann. Math.} \textbf{141}(1995), 349--371.

\bibitem[LS85]{LS} F.~Laudenbach and J.C.~Sikorav, Persistance d'intersection avec la section nulle au cours d'une isotopie hamiltonienne dans un fibre cotangent, \textit{Invent. Math.} \textbf{82} (1985), 349--357.

\bibitem[Polt93]{P} L.~Polterovich, Symplectic displacement energy for Lagrangian submanifolds, \textit{Ergodic Theory Dynam. Systems}  \textbf{13}(1993), 357--367.

\bibitem[Sik86]{S}
J.C.~Sikorav, Sur les immersions lagrangiennes dans un fibr\'{e} cotangent admettant une phase g\'{e}n\'{e}ratrice globale, \textit{C.R. Acad. Sci. Paris}, S\'{e}r. I Math. \textbf{302} (1986), 119--122.

\bibitem[Sik87]{S2}
J.C.~Sikorav, Problemes d'intersections et de points fixes en g\'{e}om\'{e}trie hamiltonienne, \textit{Comment. Math. Helv.} \textbf{62} (1987), 62--73.

\bibitem[Sik90]{S3} J.C.~Sikorav, \textit{Systemes Hamiltoniens et topologie symplectique}, ETS, EDITRICE PISA, 1990.

\bibitem[S09]{mio}
S.~Sandon, Contact Homology, Capacity and Non-Squeezing in $\mathbb{R}^{2n}\times S^{1}$ via Generating Functions, \textit{arXiv}: 0901.3112.

\bibitem[SV10]{SV}
A.~Sorrentino and C.~Viterbo, Action minimizing properties and distances on the group of Hamiltonian diffeomorphisms,
\textit{arXiv}: 1002.3915.

\bibitem[Th95]{Th}
D.~Th\'{e}ret, Utilisation des fonctions g\'{e}n\'{e}ratrices en g\'{e}om\'{e}trie symplectique globale, Ph.D.
Thesis, Universit\'{e} Denis Diderot (Paris 7), 1995.

\bibitem[Th99]{Th2}
D.~Th\'{e}ret, A complete proof of Viterbo's uniqueness theorem on generating functions, \textit{Topology Appl.} \textbf{96} (1999), 249-266.

\bibitem[Tr94]{T}
L.~Traynor, Symplectic Homology via generating functions, \textit{Geom. Funct. Anal.} \textbf{4} (1994), 718--748.

\bibitem[Vit92]{V}
C.~Viterbo, Symplectic topology as the geometry of generating functions, \textit{Math. Ann.} \textbf{292} (1992), 685--710.

\end{thebibliography}
\end{document}